\newcommand\nos{1000}
\newcommand{\weglassen}[1]{}
\theoremstyle{plain}
\newtheorem{prop}{Proposition}[section]
\newtheorem{thm}[prop]{Theorem}
\newtheorem{cor}[prop]{Corollary}
\newtheorem{lem}[prop]{Lemma}
\theoremstyle{definition}
\newtheorem{dfn}[prop]{Definition}
\newtheorem{rem}[prop]{Remark}
\newtheorem{example}[prop]{Example}
\newtheorem{examples}[prop]{Examples}
\newtheorem{lab}[prop]{}
\newcommand{\into}{\hookrightarrow}
\newcommand{\To}{\Rightarrow}
\newcommand{\ratto}{\dashrightarrow}
\renewcommand{\iff}{\Leftrightarrow}
\renewcommand{\subset}{\subseteq}
\newcommand{\A}{{\mathbb{A}}}
\newcommand{\G}{{\mathbb{G}}}
\newcommand{\N}{{\mathbb{N}}}
\renewcommand{\P}{{\mathbb{P}}}
\newcommand{\R}{{\mathbb{R}}}
\newcommand{\Z}{{\mathbb{Z}}}
\newcommand{\scrO}{{\mathscr{O}}}
\newcommand{\scrP}{{\mathscr{P}}}
\newcommand{\scrV}{{\mathscr{V}}}
\newcommand{\x}{{\mathtt{x}}}
\DeclareMathOperator{\Hom}{Hom}
\DeclareMathOperator{\Pic}{Pic}
\DeclareMathOperator{\rk}{rk}
\DeclareMathOperator{\Spec}{Spec}
\DeclareMathOperator{\supp}{supp}
\DeclareMathOperator{\trdeg}{trdeg}
\newcommand{\cone}{\mathrm{cone}}
\newcommand{\inter}{\mathrm{int}}
\newcommand{\relint}{\mathrm{relint}}
\newcommand{\lf}{\mathrm{in}}
\newcommand{\interior}{\mathrm{interior}}
\newcommand{\ord}{\mathrm{ord}}
\renewcommand{\emptyset}{\varnothing}
\renewcommand{\setminus}{\smallsetminus}
\renewcommand{\epsilon}{\varepsilon}
\newcommand{\mal}{\,.\,}
\newcommand{\ol}{\overline}
\newcommand{\plus}{{\scriptscriptstyle+}}
\newcommand{\wt}[1]{\widetilde{#1}}
\newcommand{\all}{\forall\,}
\newcommand{\bil}[2]{\langle{#1},{#2}\rangle}
\newcommand{\marginnote}[1]{\vrule width0pt height0pt depth0pt
  \vadjust{\vbox to0pt{\vss\hbox to\hsize{\hskip\hsize\quad
  #1\hss}\vskip1.5pt}}}
\newcommand{\sa}{semi-algebraic}
\begin{document}

\title
[Toric completions and bounded functions]
{Toric completions and bounded functions\\ on real algebraic varieties}

\author{Daniel Plaumann}
\author{Claus Scheiderer}

\subjclass[2010]
 {Primary 14P99; secondary 14C20, 14M25, 14P10}

\address{Fakult\"at f\"ur Mathematik, Technische Universit\"at
  Dortmund, Vogelpothsweg 87, 44227 Dortmund, Germany}
\address{Fachbereich Mathematik und Statistik, Universit\"at Konstanz,
  78457 Konstanz, Germany}

\email{Daniel.Plaumann@math.tu-dortmund.de}
\email{Claus.Scheiderer@uni-konstanz.de}

\date\today
\maketitle


\begin{abstract}
  Given a semi-algebraic set $S$, we study compactifications of $S$
  that arise from embeddings into complete toric varieties. This
  makes it possible to describe the asymptotic growth of polynomial
  functions on $S$ in terms of combinatorial data. We extend our
  earlier work in \cite{ps} to compute the ring of bounded functions
  in this setting and discuss applications to positive polynomials
  and the moment problem. Complete results are obtained in special
  cases, like sets defined by binomial inequalities. We also show
  that the wild behaviour of certain examples constructed by Krug
  \cite{kr} and by Mondal-Netzer \cite{mn} cannot occur in a toric
  setting.
\end{abstract}

\section*{Introduction}

The simplest measure for the asymptotic growth of a real polynomial
in $n$ variables on $\R^n$ is its total degree.
However, when we pass from $\R^n$ to an unbounded \sa\ subset $S\subset
\R^n$, the total degree of a polynomial may not reflect the growth
of the restriction $f|_S$ any more.

The degree of a polynomial can be understood
as its pole order along the hyperplane at infinity when $\R^n$
is embedded into projective space in the usual way. How
this relates to the growth of $f|_S$ depends on how the closure of $S$
in $\P^n(\R)$ meets the hyperplane at infinity. Unless this
intersection is empty (which would mean that $S$ is bounded in
$\R^n$) or of maximal dimension, the total degree alone will usually
not suffice to understand the growth of polynomials on $S$. We may
however hope to improve control of the growth by suitable blow-ups at
infinity.

To make this idea more precise, we consider the following
setup. Suppose that $V$ is an affine real variety and $S$ the
closure of an open semi-algebraic subset of $V(\R)$. An open-dense
embedding of $V$ into a complete variety $X$ is called
\emph{compatible with $S$} if the geometry of $S$ at infinity is
regular in the following sense: If $Z$ is any hypersurface at
infinity, i.e.~any irreducible component of the complement of $V$ in
$X$, the closure $\ol S$ of $S$ in $X(\R)$ meets $Z$ either in a
Zariski-dense subset of $Z$ or not at all. Under this condition, the
pole orders of a regular function $f$ on $V$ along the hypersurfaces
at infinity intersecting $\ol S$ accurately reflects the qualitative
growth of $f$ on~$S$.

Compatible completions were introduced by the authors in \cite{ps}, as
well as in the dissertation of the first author, motivated by earlier
work of Powers and Scheiderer in \cite{pws}. An $S$-compatible
completion of $V$ yields in particular a description of
\[
B_V(S)=\{f\in\R[V]\colon \exists \lambda\in\R\ |f|\le\lambda\text{ on S}\},
\]
the ring of regular functions on $V$ that are bounded on $S$.
If $V\into X$ is such an $S$-compatible completion and $Y$ is the
union of those irreducible components of $X\setminus V$ that are
disjoint from $\ol{S}$, then $B_V(S)$ is naturally identified with
$\scrO_X(X\setminus Y)$, the ring of regular functions of the variety
$X\setminus Y$.

The main goal of this paper is to improve on the results in \cite{ps}
and make them more explicit in the controlled setting of toric
varieties. Specifically, we study the following questions.

(1) One of the main results of \cite{ps} is the existence of regular
completions in the case $\dim V\le 2$. The higher-dimensional case
remains open and hinges on the existence of a certain type of embedded
resolution of singularities. In the toric setting, we introduce a
stronger, purely combinatorial compatibility condition in the spirit
of toric geometry (Section \ref{sec:Toric}). We show that this
condition can be satisfied if $S$
is defined by binomial inequalities (Corollary \ref{bdpolbinom}) or if $S$
is what we call a tentacle (Corollary \ref{bdpoltent}), generalizing a
concept introduced by Netzer in \cite{ne}.

Since the compatible completions in dimension $2$ constructed in
\cite{ps} are built from an embedded resolution of singularities, they
are typically quite hard to compute explicitly. By contrast, our
results in the toric setting only require the usual arithmetic of
semigroups derived from rational polyhedral cones.

(2) The transcendence degree of the ring of regular functions
$\scrO(X\setminus Y)$ of the complement of a divisor $Y$ in a complete
variety $X$ is called the \emph{Iitaka dimension} of $Y$. It is a
natural generalization of the Kodaira dimension studied extensively in
complex algebraic geometry. Thus in the case of a compatible
completion, when $B_V(S)$ is identified with $\scrO(X\setminus Y)$,
the Iitaka dimension measures in how many independent directions the
set $S$ is bounded.

In dimension $2$, the Iitaka dimension is strongly related to the
signature of the intersection matrix $A_Y$ of the divisor
$Y$. However, the correspondence is not perfect if $A_Y$ is singular.
Specifically, if $A_Y$ is negative semidefinite but not definite,
Iitaka's criterion (Proposition \ref{signintmat})
does not give anything. In the toric setting, on the other hand, we
show that the signature of $A_Y$ is sufficient to determine the
Iitaka dimension
(Proposition \ref{zsfsg}). It seems plausible that this has been
observed before,
but we were unable to find any trace in the literature. We
exploit the result in an application to positive polynomials
explained below.

(3) The existence of an $S$-compatible completion $X$ of $V$ yields a
good description of the ring of bounded functions $B_V(S)$. However,
it does not imply that $B_V(S)$ is a finitely generated
$\R$-algebra. This was discussed in \cite{ps} and much further
explored by Krug in \cite{kr}. When a toric $S$-compatible completion
exists, $B_V(S)$ is always finitely generated (Proposition
\ref{abstrfw}).

Beyond bounded polynomials, an $S$-compatible completion also provides
control over the asymptotic growth of arbitrary polynomials, as
indicated in the beginning. Let $Y'$ be the union of all irreducible
components of $X\setminus V$ that intersect the closure of $S$ in
$X$. In Section \ref{sec:Filtrations}, we study the linear subspaces
\[
  L_{X,m}(S)=\{f\in\R[V]\colon \text{all poles of }f\text{ along
    }Y'\text{ have order at most }m\},
\]
which consists of functions of bounded growth on $S$. Assume that
$B_V(S)=\R$. In analogy with the case $S=\R^n$, one might expect that
the spaces $L_{X,m}(S)$ ($m\in\N$) are finite-dimensional. If so, the
filtration $L_{X,0}(S)\subset L_{X,1}(S)\subset
L_{X,2}(S)\subset\cdots$ of $\R[V]$ behaves much like the filtration
of the polynomial ring by total degree. The properties of filtrations
obtained in this way and further generalizations have also been
studied in complex algebraic geometry (see Mondal
\cite{mo}). For us, this question is particularly relevant in the
context of positive polynomials and the moment problem, as it concerns
possible degree cancellations in sums of positive polynomials, as
explained in Section \ref{sec:PositivePolynomials}. However, a subtle
example due to Mondal and Netzer in \cite{mn} (see Example
\ref{Example:MondalNetzer}) implies that the $L_{X,m}(S)$ may have
infinite dimension. This construction is complemented by our Theorem
\ref{Thm:TotalStabilityToric}, which combines with the results of
\cite{sc} to show that if $S$ is basic open of dimension at least $2$
and admits an $S$-compatible toric completion but no non-constant
bounded function, then the spaces $L_{X,m}(S)$ are finite-dimensional
and, consequently, the moment problem for $S$ is not finitely
solvable. This comprises the results of Netzer in \cite{ne} for
tentacles and of Powers and Scheiderer in \cite{pws}. It is also
related to a theorem of Vinzant in \cite{vi}, which constructs a
certain kind of toric compatible completion under an algebraic
assumption on the description of $S$ and the ideal of $V$, as
explained in the last section of \cite{vi}.

\bigskip \emph{Acknowledgements.} We would like to thank Sebastian
Krug and Tim Netzer for helpful discussions, as well as the referee
for useful remarks on the manuscript. The first author
was supported by DFG grant PL\,549/3-1 and the Zukunftskolleg of the
University of Konstanz, the second author by DFG grant SCHE\,281/10-1.


\section{Compatible completions of semi-algebraic sets}\label{sec:CompatibleCompletions}

We briefly summarize some of the definitions and results in \cite{ps}.

\begin{dfn}\label{DefCompatibleCompletion}%
  Let $V$ be a normal affine $\R$-variety and let $S$ be a semi-algebraic
  subset of $V(\R)$. By a \emph{completion} of $V$ we mean an open
  dense embedding $V\into X$ into a normal complete
  $\R$-variety. The completion $X$ is said to be \emph{compatible
    with $S$} (or \emph{$S$-compatible}) if for every irreducible
  component $Z$ of $X\setminus V$ the following condition holds: The
  set $Z(\R)\cap\ol{S}$ is either empty or Zariski-dense in
  $Z$.
\end{dfn}

Here, when taking the closure $\ol{S}$ of a semi-algebraic subset $S$
of $X(\R)$, we refer to the Euclidean topology on $X(\R)$, rather
than the Zariski topology. Note that every irreducible component of
$X\setminus V$ is a divisor on $X$, i.e., has codimension one
(\cite{ha156} p.~66).

\begin{thm}[{\cite[Thm.~3.8]{ps}}]\label{CompatibleCompletionIsoBounded}%
Let $V$ be a normal affine $\R$-variety, let $S\subset V(\R)$ be a
semi-algebraic subset, and assume that the completion $V\into X$ of $V$ is
compatible with $S$. Let $Y$ denote the union of those irreducible
components $Z$ of $X\setminus V$ for which $\ol S\cap Z(\R)=
\emptyset$, and put $U=X\setminus Y$. Then the inclusion $V\subset U$
induces an isomorphism of $\R$-algebras
\[
\scrO_X(U)\>\cong B_V(S). \eqno{\qed}
\]
\end{thm}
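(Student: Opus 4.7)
The strategy is to analyze the chain $V\subset U\subset X$ of open immersions and show that restriction of regular functions from $U$ to $V$ gives a ring map $\scrO_X(U)\to\R[V]$ that is injective with image equal to $B_V(S)$. Injectivity is formal: $V$ is open dense in the integral scheme $U$, so no nonzero section of $\scrO_U$ vanishes on $V$.

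For the image to lie in $B_V(S)$, observe that by definition of $Y$ every irreducible component $Z\subset Y$ satisfies $\ol S\cap Z(\R)=\emptyset$, whence $\ol S\subset U(\R)$. Completeness of $X$ makes $X(\R)$ compact in the Euclidean topology, so $\ol S$ is compact; any $f\in\scrO_X(U)$ is continuous on $U(\R)$ and therefore bounded on $\ol S$, hence on $S$.

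The substantive step is surjectivity. Given $f\in B_V(S)$, suppose toward a contradiction that $f$ fails to extend regularly to $U$. Then $f$ has a pole of some positive order $k$ along an irreducible component $Z$ of $X\setminus V$ not contained in $Y$, and the compatibility hypothesis forces $\ol S\cap Z(\R)$ to be Zariski dense in $Z$. Let $Z^\circ\subset Z$ be the Zariski-open locus of points $z$ at which $X$ is smooth, $Z$ is the only component of $X\setminus V$ through $z$, and the local presentation $f=u^{-k}g$ (with $u\in\scrO_{X,z}$ a regular parameter cutting out $Z$ and $g\in\scrO_{X,z}$, $u\nmid g$) satisfies $g(z)\neq 0$. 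Normality of $X$ and the UFD property of regular local rings make $Z^\circ$ nonempty, hence Zariski dense in $Z$; by compatibility it meets $\ol S\cap Z(\R)$. Fix $z\in Z^\circ(\R)\cap\ol S$ and a Euclidean neighborhood $N$ of $z$ on which $|g|\ge\epsilon>0$; then on $N\cap V$ one has $|f|=|g|/|u|^k$, and $|f(p)|\to\infty$ as $p\to z$. Any sequence $s_n\in S$ with $s_n\to z$ then satisfies $|f(s_n)|\to\infty$, contradicting $|f|\le\lambda$ on $S$.

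The main obstacle is the local analysis in the last paragraph: one must secure a decomposition $f=u^{-k}g$ at a real point $z\in\ol S\cap Z(\R)$ with $g(z)\neq 0$, so that $|f|$ genuinely diverges along every Euclidean approach to $z$ from $V$. The Zariski-density clause in Definition \ref{DefCompatibleCompletion} is precisely what supplies such a real point inside the dense Zariski-open subset $Z^\circ$ where the analytic picture is clean.
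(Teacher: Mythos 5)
Your proof is correct and takes essentially the same route as the source: this theorem is quoted from \cite{ps} (Thm.~3.8) without a reproduced proof, and the standard argument there is exactly your three steps — injectivity from density of $V$ in $U$, boundedness of $\scrO_X(U)$ on $S$ via compactness of $\ol S\subset U(\R)$, and surjectivity by exhibiting a real point $z\in\ol S\cap Z(\R)$ in the dense open locus where $f=u^{-k}g$ with $g(z)\ne0$, so that $|f|$ blows up along $S$ near $z$. The Zariski-density clause of compatibility and the normality of $X$ (which reduces ``$f$ does not extend to $U$'' to ``$f$ has a pole along some divisorial component of $U\setminus V$'') are used exactly where you use them.
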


A semi-algebraic set is called \emph{regular} if its closure coincides
with the closure of its interior. It is called \emph{regular at
  infinity} if it is the union of a regular and a relatively compact
semi-algebraic set. One of the main results of \cite{ps} is the
existence of compatible completions for two-dimensional semi-algebraic
sets regular at infinity.

\begin{thm}\cite[Thm.~4.5]{ps} \label{ExistenceCompletionSurfaces}
  Let $V$ be a normal quasi-projective surface over $\R$,
  and let $S$ be a semi-algebraic subset of $V(\R)$ that is regular at
  infinity. Then $V$ has an $S$-compatible projective completion. If
  $V$ is non-singular then the completion can be chosen to be
  non-singular as well.\qed
\end{thm}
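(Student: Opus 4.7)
The plan is to start from an arbitrary normal projective completion of $V$ and refine it by blow-ups centered at points of the divisor at infinity, until the $S$-compatibility dichotomy holds on every boundary component. The two essential ingredients are embedded resolution of singularities for surfaces (classical in dimension two, going back to Zariski) and the regularity hypothesis on $S$, which governs the local behavior of $\ol S$ at points of the divisor at infinity.

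First I would reduce to the case where $S$ is regular: writing $S=S_1\cup S_2$ with $S_1$ regular and $S_2$ relatively compact in $V(\R)$, the closure $\ol{S_2}$ is compact and lies inside $V(\R)$, so it meets no irreducible component of $X\setminus V$ in any completion $X$. Hence a completion is $S$-compatible iff it is $S_1$-compatible, and we may assume $S$ is regular. Since $V$ is quasi-projective and normal, I then take a projective embedding, form the Zariski closure and normalize to obtain a normal projective completion $V\into X_0$; in the non-singular case, further resolve the singularities of $X_0$ outside $V$ using Zariski's theorem. Set $Y_0=X_0\setminus V$; each irreducible component $Z$ of $Y_0$ is a curve, so $Z(\R)\cap\ol S$ is a semi-algebraic subset of a real algebraic curve and is therefore either empty, Zariski-dense (both compatible), or a non-empty finite set of \emph{obstruction points}.

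Let $W\subset V$ be the Zariski closure of the topological boundary of $S$, which is a curve since $\dim V=2$, and let $\ol W$ be its closure in $X_0$. I would then apply embedded resolution of singularities to the reduced divisor $\ol W\cup Y_0$, restricting centers to lie on the successive divisors at infinity; since the compatibility condition is phrased only in terms of components of $X\setminus V$, it suffices to achieve the simple normal crossings property at points of the boundary, so blowing up only at infinity is enough. The result is a proper birational morphism $\pi\colon X\to X_0$, an isomorphism over $V$, such that $\pi^{-1}(\ol W)\cup(X\setminus V)$ has simple normal crossings at every point of $X\setminus V$. To verify compatibility on a component $Z$ of $X\setminus V$, I pick a generic real point $q$ of $Z$ lying off $\pi^{-1}(\ol W)$ and off all other components: in local analytic coordinates a Euclidean neighborhood of $q$ in $X(\R)$ is split by $Z$ into two half-disks, each lying entirely inside or entirely outside $\ol S$. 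Since this dichotomy is locally constant along each connected component of the generic real locus of $Z$, the intersection $Z(\R)\cap\ol S$ is a union of some of these components (modulo finitely many points), hence either empty or infinite — that is, Zariski-dense in $Z$.

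The main obstacle is ensuring that this resolution-plus-tangent-cone argument actually carries through on every iteration. Embedded resolution alone does not prevent the strict transform of an originally bad component from retaining a single-point intersection with $\ol S$, and controlling this requires exactly the regularity hypothesis: at an obstruction point $p$, the fact that $\ol S$ is the closure of its two-dimensional interior forces the tangent cone of $\ol S$ at $p$ to be two-dimensional, so that after blowing up $p$ the intersection of the strict transform of $\ol S$ with the exceptional divisor is a full arc of tangent directions and hence Zariski-dense. Marrying this tangent-cone analysis with the SNC configuration coming from embedded resolution, and showing that the combined procedure terminates in finitely many steps, is the delicate part of the argument; it is exactly the piece that breaks down (or at least is not known) in higher dimensions, which is why the authors note that the higher-dimensional case hinges on a suitable embedded resolution statement.
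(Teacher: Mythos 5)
Your overall strategy --- pass to a normal projective completion, apply embedded resolution at infinity to the curve $\ol W\cup(X\setminus V)$ so that it has only normal crossings along $X\setminus V$, and then read off the compatibility dichotomy from the local structure of $\ol S$ near the divisor at infinity --- is the route the paper takes (it is summarized in \ref{constrgoodcompl}; the full proof is deferred to \cite{ps}). The reduction to $S$ regular, the use of two-dimensional embedded resolution with centers at infinity, and the half-disk argument at generic real points of a boundary component are all correct.

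The gap is in how you handle the remaining possibility that a component $Z$ meets $\ol S$ only in the finitely many non-generic points. Your tangent-cone claim is false: regularity of $S$ does \emph{not} force the tangent cone of $\ol S$ at a boundary point at infinity to be two-dimensional. For instance, the strip between the parabolas $y=x^2$ and $y=x^2+1$ is regular, but its closure approaches the point $(0:1:0)\in\P^2$ inside a single tangent direction (both boundary curves and the line at infinity are mutually tangent there), so after one blow-up the closure of $S$ still meets the exceptional divisor in a single point; several iterations are needed before an arc appears. Hence the assertion that one blow-up at an obstruction point yields ``a full arc of tangent directions'' is wrong, and you yourself concede that combining this with a termination argument is left open --- but that is exactly where the content of the proof lies. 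The good news is that this patch is unnecessary: once simple normal crossings are achieved at \emph{every} point of $\ol S\cap(X\setminus V)(\R)$ (which your embedded resolution already guarantees, with termination coming from the standard resolution invariants for curves on surfaces), the connectedness argument you apply at generic points also works at the crossing points. Locally the complement of the at most two branches through such a point $P$ consists of two or four connected pieces, each contained in $V(\R)\setminus\partial S$ and hence either inside $\inter(S)$ or disjoint from $\ol S$; since $P\in\ol S$, one piece lies in $S$, and its closure contains an arc of $Z(\R)$ through $P$. Thus after resolution no component can meet $\ol S$ in a non-empty finite set, which is precisely the sufficient condition recorded in \ref{constrgoodcompl}. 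With that correction your argument closes.
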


\begin{lab}\label{constrgoodcompl}%
The proof of Theorem \ref{ExistenceCompletionSurfaces} is essentially
constructive and relies on embedded resolution of singularities. We
summarise the procedure for our present purposes. Let $V\into X$ be
any open-dense embedding of $V$ into a normal projective surface. Let
$C_\partial$ be the Zariski-closure of the boundary of $S$ in $X(\R)$
and let $C_\infty=X\setminus V$. Put $C=C_\partial\cup C_\infty$, a
reduced curve in $X$. We write
\[
\partial_X^\infty S=\ol{S}\cap C_\infty(\R)
\]
for the set of \emph{boundary points of $S$ at infinity} in $X$. A sufficient
condition for $X$ to be an $S$-compatible completion of $V$ is that
$C$ has \emph{only normal crossings} in $\partial_X^\infty
S$. Explicitly, this means the following. If $P\in\partial_X^\infty
S$, then
\begin{itemize}
\item[(1)]
$P$ is a non-singular point of all irreducible components of $C$
  that contain it.
\item[(2)]
$P$ is contained in exactly one component $C_0$ of $C_\partial$
  and one component $C_1$ of $C_\infty$, and $C_0(\R)$ and $C_1(\R)$
  have independent tangents in $P$. (Equivalently, the local equations
  for $C_0$ and $C_1$ generate the maximal ideal of the local ring
  $\scrO_{X,P}$.)
\end{itemize}

By blowing up any points in $\partial_X^\infty S$ in which conditions
(1) or (2) are violated and proceeding inductively, we can produce a
completion $\wt X$ and a corresponding curve $\wt C=\wt
C_\partial\cup\wt C_\infty$, defined as before, such that all points
of $\partial_{\wt X}^\infty S$ are normal crossings of $\wt C$, which
is therefore an $S$-compatible completion of $V$. Note that blowing-up
increases the number of irreducible components in $C_\infty$, since
the exceptional divisor is added. In the resulting $S$-compatible
completion $X$, the divisor $Y$ of Theorem~\ref{CompatibleCompletionIsoBounded}
consists of those
irreducible components of $C_\infty$ which are disjoint from
$\ol{S}$.
\end{lab}

Explicit computation of the ring of bounded polynomials following the
above procedure is possible but can quickly turn into a cumbersome
task. We give the following simple example as illustration.
A much more interesting example will be discussed in
Section~\ref{sec:Filtrations}.

\begin{example}\label{example:CompatibleCompletionStrip}
  Let $S=\{(x,y)\in\R^2\colon -1\le x\le 1\}$ be a strip in the affine
  plane $V=\A^2_\R$ and consider the embedding $V\into\P^2_\R$ into
  the projective plane given by $(u,v)\mapsto (u:v:1)$. Then
  $C_\infty=\P^2\setminus V$ is the line at infinity and $C_\partial$
  is the Zariski-closure of the two lines $\scrV(x-1)$ and $\scrV(x+1)$ in
  $V$. The set $\partial S^\infty_{\P^2}$ is the point $P=(0:1:0)$,
  which is also the unique intersection point of $C_\partial$ and
  $C_\infty$. In local coordinates $r=x/y$ and $s=1/y$ of $\P^2_\R$
  centered around $P$, we have $C_\infty=\scrV(s)$ and
  $C_\partial=\scrV\bigl((r-s)(r+s)\bigr)$. Since all three components
  of $C=C_\infty\cup C_\partial$ pass through $P$, $C$ does not have
  normal crossings in $P$. Indeed, the completion of $S$ is not
  $S$-compatible, since $\ol{S}\cap C_\infty(\R)=\{P\}$ is not
  Zariski-dense in $C_\infty$.

  Let $\wt X$ be the blow-up of $\P^2_\R$ in $P$. It is given in local
  coordinates by the quadratic transformation $r=r_1$, $s=r_1s_1$. In
  the new coordinates $r_1,s_1$, the exceptional divisor is
  $E=\scrV(r_1)$. The strict transforms of the components of $C$ in
  $X$ are $C_\infty=\scrV(s_1)$ and
  $C_\partial=\scrV\bigl((s_1-1)(s_1+1)\bigr)$. Now $\wt C_\infty=\wt
  X\setminus V$ has the two components $C_\infty$ and $E$. Since
  $C_\partial$ meets $E$ in the points $(0,1)$ and $(0,-1)$ but does
  not meet $C_\infty$, we see that $\wt X$ is an $S$-compatible
  completion of $V$ and $Y=C_\infty$ is the component of $\wt
  C_\infty$ that is disjoint from $\ol{S}_{\wt X(\R)}$. To compute
  $\scrO(\wt X\setminus Y)$, write $f\in\R[x,y]$ as $f=\sum_{i,j}
  a_{ij}x^iy^j= \sum_{i,j} a_{ij}r_1^{-j}s_1^{-i-j}$, so that $f$ lies
  in $\scrO(\wt X\setminus Y)$ if and only if $j=0$. Thus
  $B(S)=\scrO(\wt X\setminus Y)=\R[x]$.
\end{example}

In dimensions $\ge3$, it is not even guaranteed that the ring
$B(S)$ is finitely generated (see \cite{ps} Sect.~5).


\section{Toric completions}\label{sec:Toric}

Let $V$ be an affine toric variety. By a \emph{toric completion} of
$V$ we mean an open embedding of $V$ into a complete toric variety
$X$ which is compatible with the torus actions. Let $S\subset V(\R)$
be a semi-algebraic subset. We are going to work out conditions on $S$
ensuring that $V$ has a toric completion $V\subset X$ that is
compatible with $S$. The existence of such a completion allows us to
make the ring of bounded polynomial functions on $S$ completely
explicit. It also prevents several pathologies that can occur in more
general cases.

We start by reviewing some general notions on toric varieties. An
excellent reference is the book of Cox, Little and Schenck \cite{cls}.

\begin{lab}
Let $T$ be an $n$-dimensional split $\R$-torus and let $T(\R)\cong
(\R^*)^n$ be the group of $\R$-points. All toric varieties will be
$T$-varieties. Let $M=\Hom(T,\G_m)$ (resp.\ $N=\Hom(\G_m,T)$), the
group of characters (resp.\ of co-characters) of $T$. Both are free
abelian groups of rank~$n$, each being the natural dual of the other.
We write both groups additively and denote the character corresponding
to $\alpha\in M$ by $\x^\alpha$, the co-character corresponding to
$v\in N$ by $\lambda_v$.
The pairing between $M$ and $N$ will be denoted $\bil\alpha v$.
\end{lab}

\begin{lab}\label{rappl2}%
Let $M_\R=M\otimes\R$, $N_\R=N\otimes\R$. By a cone $\sigma\subset
N_\R$ we always mean a finitely generated rational convex
cone. Let $\sigma^*\subset M_\R$ denote the dual cone of
$\sigma$, let $H_\sigma=M\cap\sigma^*$, and write $\R[H_\sigma]$ for
the semigroup algebra of $H_\sigma$. Then
$U_\sigma=\Spec\R[H_\sigma]$ is an affine toric variety that
contains a unique closed $T$-orbit, denoted $O_\sigma$.

Assume that the cone $\sigma\subset N_\R$ is pointed. Then the dense
$T$-orbit $U_0$ in $U_\sigma$ is isomorphic to $T$, and we may use any
fixed $\xi_0\in U_0(\R)$ to equivariantly identify $U_0$ with $T$. Let
$v\in N\cap\relint(\sigma)$. For any $\xi\in U_0(\R)$, the limit
\[
L_v(\xi)\>:=\>\lim_{s\to0}\Bigl(\lambda_v(s)\cdot\xi\Bigr)
\]
exists in $U_\sigma(\R)$ and lies in $O_\sigma(\R)$.
Clearly, the map $L_v\colon U_0(\R)\to O_\sigma(\R)$ is
equivariant under the $T(\R)$-action. In particular, $L_v$ is an open
map.
\end{lab}

\begin{lab}
Fixing $v\in N$, we consider the $v$-grading of $\R[T]$, which is
the grading that makes the character $\x^\alpha$ homogeneous of
degree $\bil\alpha v$, for every $\alpha\in M$. We say that
$f\in\R[T]$ is $v$-homogeneous if $f$ is homogeneous in the
$v$-grading. For $0\ne f\in\R[T]$, let $\lf_v(f)\in\R[T]$ denote the
leading component of $f$ in the $v$-grading, i.e., the non-zero
$v$-homogeneous component of $f$ of \emph{smallest} $v$-degree.
Two vectors $v,\,v'\in N$ satisfy $\lf_v(f)=\lf_{v'}(f)$ if and
only if $v$ and $v'$ lie in the relative interior of the same cone of
the normal fan of the Newton polytope of $f$. (Note that since we
define the leading form $\lf_v(f)$ to be the homogeneous component of
smallest $v$-degree, we are using inward, rather than outward, normal
cones here.)
\end{lab}

\begin{lab}
A fan is a finite non-empty set $\Sigma$ of closed pointed rational
cones in $N_\R$ which is closed under taking faces and such that the
intersection of any two elements of $\Sigma$ is a face of both. The
union of all cones in $\Sigma$ is called the support of $\Sigma$,
denoted by $|\Sigma|$; if $|\Sigma|=N_\R$, then $\Sigma$ is called
complete.
The fan $\Sigma$ gives rise to a toric variety $X_\Sigma$, obtained
by glueing the affine toric varieties $U_\sigma$, $\sigma\in\Sigma$.
The variety $X_\Sigma$ is complete if and only if the fan $\Sigma$ is
complete. In general, the ring of global regular functions
$\scrO(X_\Sigma)$ is the semigroup algebra $\R[H]$, where
$H=M\cap |\Sigma|^\ast$.
By Dickson's lemma, this is a finitely generated $\R$-algebra.
\end{lab}

Let $U_\sigma$ be an affine toric variety, and let $S\subset
U_\sigma(\R)$ be a \sa\ set. We are going to study conditions under
which there exists a toric completion of $U_\sigma$ that is
compatible with $S$, and which therefore
allows the explicit computation of the ring $B_{U_\sigma}(S)$ of
polynomials bounded on $S$. We first propose an abstract framework,
see Proposition \ref{abstrfw} below. After this, we will exhibit
concrete situations to which the abstract framework applies.

We will always assume that the \sa\ set $S$ is open
and contained in the dense torus orbit in $U_\sigma$.

\begin{lab}\label{dfnwv}%
Let $S\subset T(\R)$ be an open \sa\ subset. Given $v\in N$, put
$$S(v)\>:=\>\bigl\{\xi\in T(\R)\colon\all0<s\ll1\ \ \lambda_v(s)\xi
\in S\bigr\}.$$
It is easily seen that $(S_1\cup S_2)(v)=S_1(v)\cup S_2(v)$ and
$(S_1\cap S_2)(v)=S_1(v)\cap S_2(v)$ hold for all $v\in N$ and all
open \sa\ sets $S_1,\,S_2\subset T(\R)$. Further let
$$K(S)\>:=\>\{v\in N\colon S(v)\ne\emptyset\},\quad K_0(S)\>:=\>
\{v\in N\colon\inter(S(v))\ne\emptyset\}.$$
Then $K(S_1\cup S_2)=K(S_1)\cup K(S_2)$ and $K_0(S_1\cup S_2)=
K_0(S_1)\cup K_0(S_2)$ hold.
\end{lab}

\begin{lem}\label{adaptfan}%
  Given any open \sa\ set $S\subset T(\R)$, there exists a
  fan $\Sigma$ in $N_\R$ such that
\[
K(S)\>=\>N\cap\bigcup_{\sigma\in E}\relint(\sigma),\quad K_0(S)
\>=\>N\cap\bigcup_{\sigma\in E_0}\relint(\sigma)
\]
hold for suitable subsets $E_0$, $E$ of $\Sigma$. Any such fan
$\Sigma$ is said to be \emph{adapted to~$S$}.
\end{lem}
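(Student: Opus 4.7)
The plan is to describe $S$ via finitely many polynomial inequalities, to encode the dependence of $S(v)$ on $v$ through the preorder that $v$ induces on a finite set of monomial exponents, and then to construct $\Sigma$ from a central hyperplane arrangement designed to make this preorder constant on the relative interior of each cone.

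First I would write $S=\bigcup_{j=1}^N B_j$ as a finite union of basic open sets $B_j=\{f_{j,1}>0,\ldots,f_{j,k_j}>0\}$ with $f_{j,i}\in\R[T]$, which is possible since $S$ is open \sa. In view of the formulas $K(S_1\cup S_2)=K(S_1)\cup K(S_2)$ and $K_0(S_1\cup S_2)=K_0(S_1)\cup K_0(S_2)$ recorded in \ref{dfnwv}, it is enough to construct a single fan adapted to all $B_j$ simultaneously. For fixed $v\in N$ and $\xi\in T(\R)$,
\[
f_{j,i}(\lambda_v(s)\xi)\>=\>\sum_{\alpha\in\supp(f_{j,i})}c_{j,i,\alpha}\,\xi^\alpha\,s^{\bil{\alpha}{v}}\>\in\>\R[s,s^{-1}],
\]
and the sign of this Laurent polynomial for all sufficiently small $s>0$ is the sign of its smallest-degree nonzero coefficient once the monomials are grouped by the value of $\bil{\alpha}{v}$. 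Hence the condition $\xi\in B_j(v)$ translates into a Boolean combination of polynomial equations and strict inequalities on $\xi$ whose structure depends on $v$ only through the total preorder that $v$ induces on the finite set $\mathcal{A}:=\bigcup_{j,i}\supp(f_{j,i})\subset M$.

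I then take the finite central hyperplane arrangement $\mathcal{H}$ in $N_\R$ whose hyperplanes are $H_{\alpha,\beta}:=\{v:\bil{\alpha-\beta}{v}=0\}$ for distinct $\alpha,\beta\in\mathcal{A}$. After adjoining finitely many further rational hyperplanes if necessary, one may assume $\bigcap_{H\in\mathcal{H}}H=\{0\}$, so that the closures of the chambers of $\mathcal{H}$ together with all their faces form a complete fan $\Sigma$ of pointed rational cones. By construction, for $v,v'$ in the relative interior of any single cone $\sigma\in\Sigma$ the preorder induced on $\mathcal{A}$ is the same, so by the previous paragraph $B_j(v)=B_j(v')$ for every $j$, and hence $S(v)=S(v')=:S(\sigma)$. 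Setting $E:=\{\sigma\in\Sigma:S(\sigma)\ne\emptyset\}$ and $E_0:=\{\sigma\in\Sigma:\inter(S(\sigma))\ne\emptyset\}$, and using that every $v\in N$ lies in $\relint(\sigma)$ for a unique $\sigma\in\Sigma$, yields the claimed formulas for $K(S)$ and $K_0(S)$.

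The main obstacle I expect is the precise lexicographic argument in the second paragraph: one must verify that positivity of $f_{j,i}(\lambda_v(s)\xi)$ for all small $s>0$ is controlled exactly by the leading nonvanishing $v$-homogeneous component of $f_{j,i}$ evaluated at $\xi$, and that this produces a \sa\ condition on $\xi$ which sees $v$ only through its preorder on $\mathcal{A}$. Once that dictionary is pinned down, the construction of $\Sigma$ from the hyperplane arrangement and the extraction of the sets $E$ and $E_0$ are purely combinatorial bookkeeping.
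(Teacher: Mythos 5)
Your proposal is correct and follows essentially the same route as the paper: reduce to basic open sets via the union formulas in \ref{dfnwv}, expand $f(\lambda_v(s)\xi)$ as a Laurent series in $s$, observe that $S(v)$ depends on $v$ only through the induced ordering/grouping of the exponents in the supports of the $f_{j,i}$, and take a fan on whose relative interiors this combinatorial datum is constant. Your hyperplane arrangement $\{H_{\alpha,\beta}\}$ merely makes explicit the fan whose existence the paper dismisses as clear, and the leading-coefficient sign criterion you flag as the main obstacle is exactly the characterization of $\xi\in S(v)$ the paper writes down.
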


\begin{proof}
We may assume that $S=\{\xi\in T(\R)\colon f_i(\xi)>0$
($i=1,\dots,r)\}$ is basic open, with $f_1,\dots,f_r\in\R[T]$.
Given $f\in\R[T]$ and $v\in N$, let $f_{v,d}\in\R[T]$ be the
$v$-homogeneous component of $f$ of degree~$d$. Thus
$$f(\lambda_v(s)\xi)\>=\>\sum_{d\in\Z}f_{v,d}(\xi)\cdot s^d$$
for $s\in\R$ and $\xi\in T(\R)$.
So $\xi\in S(v)$ holds if and only if, for every $i=1,\dots,r$, there
exists $d_i\in\Z$ with $(f_i)_{v,d_i}(\xi)>0$ and with $(f_i)_{v,d'}
(\xi)=0$ for all $d'<d_i$.
Let $\Lambda(f,v)$ denote the sequence of non-zero $v$-homogeneous
components of $f$, ordered by increasing degree.
Then, if $v,\,v'\in N$ satisfy $\Lambda(f_i,v)=\Lambda(f_i,v')$
for $i=1,\dots,r$, if follows that $S(v)=S(v')$. It is clear that
there is a fan $\Sigma$ such that any two vectors $v,\,v'$ in the
relative interior of the same cone of $\Sigma$ satisfy this
condition. Such $\Sigma$ satisfies the condition of the lemma.
\end{proof}

\begin{rem}
  If $S$ is a subset of the positive orthant in $\R^n$, the
  set $K(S)$ is closely related to the tropicalization of $S$
  constructed by Alessandrini in \cite{al}.
\end{rem}

\begin{lem}\label{klarlem}%
Let $S\subset T(\R)$ be an open \sa\ set, and let $\rho\subset N_\R$
be a pointed cone.
\begin{itemize}
\item[(a)]
If $K(S)\cap\relint(\rho)\ne\emptyset$, then $\ol S\cap O_\rho(\R)
\ne\emptyset$.
\item[(b)]
If $K_0(S)\cap\relint(\rho)\ne\emptyset$, then $\ol S\cap O_\rho
(\R)$ is Zariski dense in $O_\rho$.
\end{itemize}
\end{lem}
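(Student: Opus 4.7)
The plan is to use the limit map $L_v$ from paragraph \ref{rappl2}, which is the precise link between the combinatorial data $K(S),K_0(S)$ and the orbit $O_\rho$.

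For part (a), I would pick any $v\in K(S)\cap\relint(\rho)$ and any $\xi\in S(v)$. By the definition of $S(v)$, the points $\lambda_v(s)\xi$ lie in $S$ for all sufficiently small $s>0$. Since $\rho$ is pointed and $v\in N\cap\relint(\rho)$, the limit $L_v(\xi)=\lim_{s\to0}\lambda_v(s)\xi$ exists in $U_\rho(\R)$ and lies in $O_\rho(\R)$. As $L_v(\xi)$ is the Euclidean limit of points of $S$, it belongs to $\ol S\cap O_\rho(\R)$, which is therefore non-empty.

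For part (b), the same construction gives a map $W\to\ol S\cap O_\rho(\R)$ for every non-empty open $W\subset S(v)$, where $v\in K_0(S)\cap\relint(\rho)$. Choosing $W=\inter(S(v))\ne\emptyset$, the image $L_v(W)$ lies in $\ol S\cap O_\rho(\R)$. The key additional ingredient, already recorded in paragraph \ref{rappl2}, is that $L_v\colon U_0(\R)\to O_\rho(\R)$ is an open map (being equivariant for the transitive $T(\R)$-action on both sides). Hence $L_v(W)$ is a non-empty Euclidean open subset of $O_\rho(\R)$.

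It then remains to observe that any non-empty Euclidean open subset of $O_\rho(\R)$ is Zariski dense in the irreducible variety $O_\rho$: indeed, $O_\rho$ is itself a split torus over $\R$, whose $\R$-points contain an open dense subset (of the form $(\R^*)^k$) that is Zariski dense. I do not expect any genuine obstacle here; the only point requiring a moment of care is that $L_v$ in \ref{rappl2} was described on $U_\sigma$ for a single cone $\sigma$, but the construction depends only on $v$ and on the pointedness of the cone containing $v$, so it applies verbatim to $\rho$.
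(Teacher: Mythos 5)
Your proof is correct and follows exactly the paper's own argument: take $v\in\relint(\rho)$ in $K(S)$ (resp.\ $K_0(S)$), use the open equivariant limit map $L_v$ of \ref{rappl2} to push $S(v)$ (resp.\ its interior) into $\ol S\cap O_\rho(\R)$, and conclude. The extra remarks on Zariski density of open subsets of $O_\rho(\R)$ and on the applicability of \ref{rappl2} to $\rho$ are correct and merely make explicit what the paper leaves implicit.
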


Here we fix an equivariant identification $T=U_0$. The closures are
taken inside the affine toric variety $U_\rho$ and with respect to
the Euclidean topology. Recall that $U_\rho$ contains $U_0=T$ (resp.\
$O_\rho$) as an open dense (resp.\ as a closed) $T$-orbit.

\begin{proof}
Given $v\in N\cap\relint(\rho)$, the map $L_v\colon T(\R)\to
O_\rho(\R)$ (see \ref{rappl2}) is open and maps $S(v)$ into
$\ol S\cap O_\rho(\R)$. The hypothesis $v\in K(S)$ resp.\ $v\in
K_0(S)$ means $S(v)\ne\emptyset$ resp.\ $\inter(S(v))\ne\emptyset$.
This proves the lemma.
\end{proof}

\begin{lab}\label{abstractsetup}%
Now let $\Sigma$ be a complete fan in $N_\R$, and let $X_\Sigma$ be
the associated complete toric variety. We write $\Sigma(d)$ for the
set of $d$-dimensional cones in $\Sigma$. For $\tau\in\Sigma$, let
$Y_\tau$ be the Zariski closure of $O_\tau$ in $X_\Sigma$.
In particular, $Y_\tau$ is a prime Weil divisor on $X_\Sigma$ when
$\tau\in\Sigma(1)$. We fix a cone $\sigma\in\Sigma$ and consider
$X_\Sigma$ as a toric completion of the affine toric variety
$U_\sigma$.

Let $S\subset T(\R)=U_0(\R)$ be an open \sa\ set. We will require the
following \emph{toric compatibility} assumption:
\begin{center}
  \begin{tabular}{lp{0.7\textwidth}}
  \emph{(TC)} & \emph{For any $\tau\in\Sigma(1)$ with
  $\tau\not\subset\sigma$, either $\ol S\cap Y_\tau(\R)$ is empty or
  $K_0(S)\cap\relint(\tau)$ is non-empty.}
  \end{tabular}
\end{center}
(The two cases are mutually exclusive by Lemma \ref{klarlem}.)
We define the subfan $F_S$ of $\Sigma$ by
\[
F_S \>:=\> \left\{\rho\in\Sigma\colon
  \text{\begin{tabular}{l}
Every one-dimensional face $\tau$ of $\rho$ satisfies\\
$\tau\subset\sigma$ or $K_0(S)\cap\relint(\tau)\ne\emptyset$
\end{tabular}}
\right\}.
\]
\end{lab}

\begin{prop}\label{abstrfw}%
With the above notation, assume that the toric compatibility condition
$(TC)$ holds. Then the toric variety $X_\Sigma$ is an
$S$-compatible completion of $U_\sigma$. In particular, let
$B_{U_\sigma}(S)$ be the subring of $\R[U_\sigma]$ consisting of the
regular functions that are bounded on $S$. Then
$$B_{U_\sigma}(S)\>=\>\scrO(X_{F_S})\>=\>\R[H]$$
with $H=M\cap|F_S|^*$. In particular, the $\R$-algebra
$B_{U_\sigma}(S)$ is finitely generated.
\end{prop}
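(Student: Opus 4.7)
\emph{Approach.} I would first verify that $X_\Sigma$ satisfies Definition~\ref{DefCompatibleCompletion} for $V=U_\sigma$, so that Theorem~\ref{CompatibleCompletionIsoBounded} applies and $B_{U_\sigma}(S)\cong\scrO_{X_\Sigma}(X_\Sigma\setminus Y)$ for the divisor $Y$ defined there. Then I would identify the open subvariety $X_\Sigma\setminus Y$ combinatorially with $X_{F_S}$. Once this identification is made, the equality $\scrO(X_{F_S})=\R[M\cap|F_S|^*]$ and its finite generation follow immediately from the facts on global sections of toric varieties and Dickson's lemma recalled earlier.

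\emph{Compatibility.} Every irreducible component of $X_\Sigma\setminus U_\sigma$ is a $T$-invariant prime divisor, hence has the form $Y_\tau$ for some $\tau\in\Sigma(1)$ with $\tau\not\subset\sigma$. Fix such a $\tau$; if $\ol S\cap Y_\tau(\R)=\emptyset$ there is nothing to show, so assume the intersection is non-empty. Hypothesis $(TC)$ then forces $K_0(S)\cap\relint(\tau)\ne\emptyset$, and Lemma~\ref{klarlem}(b) gives that $\ol S\cap O_\tau(\R)$ is Zariski dense in $O_\tau$. The one subtlety is that Lemma~\ref{klarlem} takes the closure inside $U_\tau$ rather than inside $X_\Sigma$, but since $U_\tau$ is open in $X_\Sigma$ and contains $O_\tau$, the two Euclidean closures of $S$ agree on $O_\tau(\R)$. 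As $O_\tau$ is open and dense in $Y_\tau$, Zariski density in $O_\tau$ upgrades to Zariski density in $Y_\tau$, establishing compatibility.

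\emph{Identification with $X_{F_S}$.} By Theorem~\ref{CompatibleCompletionIsoBounded}, $Y=\bigcup_{\tau\in A}Y_\tau$ where $A$ consists of those $\tau\in\Sigma(1)$ with $\tau\not\subset\sigma$ and $\ol S\cap Y_\tau(\R)=\emptyset$. Using $(TC)$ and the mutual exclusivity of its two alternatives, $A$ equals the set of rays of $\Sigma$ not contained in $\sigma$ for which $K_0(S)\cap\relint(\tau)=\emptyset$; unwinding the definition, this is exactly the set of rays of $\Sigma$ that do not belong to $F_S$. The orbit--cone correspondence now finishes the job: for $\rho\in\Sigma$ and $\tau\in\Sigma(1)$, $U_\rho\cap Y_\tau\ne\emptyset$ iff $\tau$ is a face of $\rho$, so $U_\rho\subset X_\Sigma\setminus Y$ iff no ray of $A$ is a face of $\rho$, which is precisely the condition $\rho\in F_S$. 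Hence $X_\Sigma\setminus Y=\bigcup_{\rho\in F_S}U_\rho=X_{F_S}$, and the stated equalities follow.

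\emph{Where the work is.} The argument is not deep: the only places where care is needed are the handling of Euclidean closures in $U_\tau$ versus $X_\Sigma$ in the compatibility step, and the orbit--cone bookkeeping matching $A$ to the rays missing from $F_S$. Both are routine once the toric dictionary is set up, and the proposition is essentially a translation of $(TC)$ via Lemma~\ref{klarlem}.
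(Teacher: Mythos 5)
Your proposal is correct and follows essentially the same route as the paper's proof: use $(TC)$ together with Lemma~\ref{klarlem}(b) to verify $S$-compatibility, apply Theorem~\ref{CompatibleCompletionIsoBounded}, and then identify $X_\Sigma\setminus Y$ with $X_{F_S}$ combinatorially. The extra details you supply (Euclidean closures in $U_\tau$ versus $X_\Sigma$, and the orbit--cone bookkeeping matching the rays of $Y$ with those missing from $F_S$) are accurate and merely make explicit what the paper leaves implicit.
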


\begin{proof}
Write $X=X_\Sigma$, a normal and complete toric variety containing
$U_\sigma$ as an open affine toric subvariety. The irreducible
components of $X\setminus U_\sigma$ are the $Y_\tau$ where $\tau\in
\Sigma(1)$ and $\tau\not\subset\sigma$.
Given such $\tau$ with $\ol S\cap Y_\tau(\R)\ne\emptyset$, we know
that $\ol S\cap Y_\tau(\R)$ is Zariski dense in $Y_\tau$, by
condition $(TC)$ and Lemma \ref{klarlem}(b).
So the completion $X$ of $U_\sigma$ is compatible with the \sa\ set
$S\subset U_\sigma(\R)$, in the sense of
\ref{DefCompatibleCompletion}.
By Theorem \ref{CompatibleCompletionIsoBounded},
we therefore have $B_{U_\sigma}(S)=
\scrO(X\setminus Y)$ where $Y$ is the union of those irreducible
components $Y_\tau$ of $X\setminus U_\sigma$ for which $\ol S\cap
Y_\tau(\R)=\emptyset$. By condition $(TC)$, the latter means
$\tau\in\Sigma(1)$, $\tau\not\subset\sigma$ and $\relint(\tau)\cap
K_0(S)=\emptyset$. So $X\setminus Y$ is precisely the toric variety
associated to the subfan $F_S$ of $\Sigma$ defined above.
\end{proof}

\begin{examples}\label{examples:CompatibleCompletions}
Let $n=2$. We compatibly identify $M=\Z^2$, $N=\Z^2$ and
$T(\R)= (\R^\ast)^2$. We denote by $(e_1,e_2)$ the standard basis of
$N_\R=\R^2$ and by $(e_1^\ast,e_2^\ast)$ the dual basis of
$M_\R$. Let
$\sigma=\cone(e_1,e_2)$ be the positive quadrant in $N_\R$, so that
$\R[U_\sigma]=\R[x_1,x_2]$ and $U_\sigma=\A^2$.
Let $\Sigma_0$ be the standard fan of $\P^2$ with ray generators
$e_1$, $e_2$ and $-(e_1+e_2)$.
We use homogeneous coordinates $(u_0:u_1:u_2)$ on $\P^2$ with $x_i=
\frac{u_i}{u_0}$ ($i=1,2$).
\smallskip

(1)
Consider the set
$$S\>:=\>\bigl\{(\xi_1,\xi_2)\in(\R^*)^2\colon -1<\xi_1< 1\bigr\}.$$
It is easily seen that $K(S)=K_0(S)=\{(v_1,v_2)\in N\colon
v_1\ge0\}$.
Let $\Sigma$ be the refinement of $\Sigma_0$ generated by the
additional ray generator $-e_2$. Then $\Sigma$ is adapted to $S$,
c.f.\ Lemma \ref{adaptfan}. The toric variety $X_\Sigma$ is the
blowup of $\P^2$ in the point $(0:0:1)$, which is exactly the
compatible completion of the strip $\ol S\subset\R^2$ we
considered in Example \ref{example:CompatibleCompletionStrip}.
By definition, $|F_S|=\{(v_1,v_2)\in N_\R\colon v_1\ge0\}$,
so that $M\cap|F_S|^\ast=\{(k,0)\in M\colon k\ge0\}$, whence
$\scrO(X_{F_S})=\R[x_1]$. It is not hard to check that condition
$(TC)$ is met in this example, so that Proposition \ref{abstrfw}
yields $B_{\A^2}(S)=\R[x_1]$.
\smallskip

(2)
Let $k\ge1$, and let
$$S\>:=\>\bigl\{(\xi_1,\xi_2)\in \R^2\colon\xi_1^k\xi_2<1,\
\xi_1>0,\ \xi_2>0\bigr\}$$
(see Figure \ref{fig:FanHyperbola1} for $k=2$). Here we find that
$K(S)=K_0(S)$ is the half-space $kv_1+v_2\ge0$ in $N$.
We again refine $\Sigma_0$ by adding ray generators $\pm(e_1-ke_2)$
to $\Sigma_0$, and obtain the fan $\Sigma$ shown on the right of
Figure \ref{fig:FanHyperbola1}.

By construction, $\Sigma$ is adapted to $S$, and $|F_S|=
\{v\in N_\R\colon kv_1+v_2\ge0\}$.
We check that condition $(TC)$ is satisfied. This amounts to
showing for $\tau=\cone(-e_1-e_2)$ that $Y_\tau(\R)\cap\ol{S}=
\emptyset$.
Indeed, let $\rho=\cone(-e_1-e_2,\,-e_1+ke_2)$. Then
$\rho^*$ is generated by $e_2^*-e_1^*$ and $-(ke_1^*+e_2^*)$, so
that $\R[U_\rho]=\R[H]$, where $H$ is the saturated semigroup
generated by $y_1=x_1^{-k}x_2^{-1}$, $y_2=x_1^{-1}$ and $y_3=
x_1^{-1}x_2$, so that $\R[U_\rho]\cong\R[y_1,y_2,y_3]/
(y_1y_3-y_2^{k+1})$.
Under this identification, we find $y_1=0$ on $Y_\tau\cap U_\rho$
while $y_1>1$ on $S\cap U_\rho(\R)$.
So $\ol{S}\cap(U_\rho\cap Y_\tau)(\R)=\emptyset$. Essentially the
same computation applies to $\rho'=\cone(-e_1-e_2,\,e_1-ke_2)$. Hence
we conclude $B_{\A^2}(S)=\scrO(X_{F_S})=\R[x_1^kx_2]$.
This
will be discussed in general below (c.f.~Corollary~\ref{bdpolbinom}).

\begin{figure}[h]
  \centering
  \begin{tikzpicture}[scale=0.5]
    \begin{scope}
      \clip (-0.5,-0.5) rectangle (5.5,4.5);
      \draw[smooth,domain=0.1:4.5,thick, samples=\nos] plot({\x},{1/\x^2))});
    \pgfsetfillpattern{north west lines}{black};
       \fill[smooth,domain=0.1:4.5,thick, samples=\nos] (0,0) --
       (0,4.5) --
       plot({\x},{1/\x^2))}) -- (4.5,0) -- (0,0);
    \end{scope}
    \draw[->] (-0.5,0) -- (4.5,0) node[right]{$x_1$}; \draw[->] (0,-0.5) --
    (0,4.5) node[above]{$x_2$};
  \end{tikzpicture}\hspace{5em}
  \begin{tikzpicture}[scale=1.2]
    \draw[thick] (0,0) -- (1,0); \draw[thick] (0,0) -- (0,1);
    \pgfsetfillpattern{north west lines}{black};
    \fill (0,0) -- (1,0) -- (1,1) -- (0,1) -- (0,0);

    \draw[thick] (0,0) -- (-1/2,1);
    \pgfsetfillpattern{north east lines}{black};
    \fill (0,0) -- (-1/2,1) -- (0,1) -- (0,0);
    \fill (0,0) -- (1,0) -- (1/2,-1) -- (0,0);

    \draw[thick] (0,0) -- (1/2,-1);

    \draw (0,0) -- (-1,-1);
  \end{tikzpicture}\caption{}\label{fig:FanHyperbola1}
\end{figure}
\smallskip

(3)
Let
$$S\>:=\>\bigl\{(\xi_1,\xi_2)\in \R^2\colon\xi_1(\xi_1-\xi_2)+1>0,\
\xi_2(\xi_2-\xi_1)+1>0,\ \xi_1>0,\ \xi_2>0\bigr\}$$
(see Figure \ref{fig:FanHyperbola2}). In this example, we have
$K_0(S)=\{v\in N\colon v_1+v_2\ge0\}$, while $K(S)$ consists of
$K_0(S)$ and the half-line $\tau$ generated by $-(e_1+e_2)$.
Let $\Sigma$ be the complete fan with ray generators $e_1$, $e_2$,
$\pm(e_1-e_2)$ and $-(e_1+e_2)$ (see Figure \ref{fig:FanHyperbola2}).
Then $\Sigma$ is adapted to $S$ and $|F_S|$ is the half-plane
$v_1+v_2\ge0$. But condition ($TC$) is not satisfied. If it were,
we could conclude $B_{\A^2}(S)=\R[x_1x_2]$, which is clearly not true
since $x_1x_2$ is unbounded on $S$.
Indeed, it follows from Lemma \ref{klarlem} that
$Y_\tau(\R)\cap\ol{S}\neq\emptyset$. (It is not hard to check
directly that it is a single point).
\begin{figure}[h]
  \centering
  \begin{tikzpicture}[scale=0.5]
    \begin{scope}
      \clip (-0.5,-0.5) rectangle (4.5,4.5);

    \pgfsetfillpattern{north west lines}{black};
      \fill (0,0) rectangle (4.5,4.5);

       \fill[white, smooth,domain=0.1:4.5,thick, samples=\nos]
       plot({\x},{\x+1/\x});
      \fill[white, smooth,domain=0.1:4.5,thick, samples=\nos]
       plot({\x+1/\x},{\x});

      \draw[smooth,domain=0.1:4.5,thick, samples=\nos]
      plot({\x},{\x+1/\x});
      \draw[smooth,domain=0.1:4.5,thick, samples=\nos]
      plot({\x+1/\x},{\x});
    \end{scope}
    \draw[->] (-0.5,0) -- (4.5,0) node[right]{$x_1$}; \draw[->] (0,-0.5) --
    (0,4.5) node[above]{$x_2$};
  \end{tikzpicture}\hspace{5em}
  \begin{tikzpicture}[scale=1.2]
    \draw[thick] (0,0) -- (1,0); \draw[thick] (0,0) -- (0,1);
    \pgfsetfillpattern{north west lines}{black};
    \fill (0,0) -- (1,0) -- (1,1) -- (0,1) -- (0,0);

    \draw[thick] (0,0) -- (-1,1);
    \draw[thick] (0,0) -- (1,-1);
    \pgfsetfillpattern{north east lines}{black};
    \fill (0,0) -- (-1,1) -- (0,1) -- (0,0);
    \fill (0,0) -- (1,0) -- (1,-1) -- (0,0);

    \draw (0,0) -- (-1,-1);
  \end{tikzpicture}\caption{}\label{fig:FanHyperbola2}
\end{figure}
\smallskip

(4)
The key property for Proposition \ref{abstrfw} to apply is
condition $(TC)$ from \ref{abstractsetup}. For a general open \sa\
set, this condition cannot be satisfied by any choice of a fan
$\Sigma$ in $N_\R$, as is demonstrated by the following simple
example: For the open set $S=\{\xi\in(\R_\plus^*)^2\colon\xi_1,\,
\xi_2>1$ and $1<\xi_1-\xi_2<2\}$ in the 2-dimensional torus, we have
$K(S)=\Z_\plus(-e_1-e_2)$ and $K_0(S)=\{0\}$. So, at least with
respect to $\sigma=\{0\}$, condition $(TC)$ cannot hold for any
complete fan $\Sigma$.
\end{examples}

\begin{rem}
  Example (4) can still be saved by making a linear change
  of coordinates.
  However, it is clear that more complicated examples
  of open sets may be constructed for which no linear coordinate
  change allows to apply condition $(TC)$.
  There is also an indirect way to see this. Whenever
  condition $(TC)$ applies, we see from Proposition \ref{abstrfw} that
  the ring $B_{U_\sigma}(S)$ of bounded polynomials on $U_\sigma$ is
  finitely generated as an $\R$-algebra.
  On the other hand, it is
  known that there exist open \sa\ subsets $S$ of $(\R^*)^n$ for
  $n\ge3$ for which the $\R$-algebra $B_{\A^n} (S)$ fails to be
  finitely generated (see Krug \cite{kr}).
\end{rem}

Condition ($TC$) can be rather cumbersome to check, as the
above examples show. We therefore seek favorable situations in which
this condition can be guaranteed, and therefore allows a purely
combinatorial computation of the ring of bounded functions. We
discuss two classes of sets where this approach is successful, namely
binomially defined sets and the so-called ``tentacles'' considered by
Netzer in \cite{ne}.

\begin{lab}
Let $Q:=(\R^*_\plus)^n\subset T(\R)$, and let
$$S\>=\>\Bigl\{\xi\in Q\colon a_i\xi^{\alpha_i}<b_i\xi^{\beta_i}\
(i=1,\dots,r)\Bigr\}$$
be a non-empty basic open set in $Q$ defined by binomial
inequalities, where $0\ne a_i,\,b_i\in\R$ und $\alpha_i,\,\beta_i\in
M=\Z^n$ ($i=1,\dots,r$).
An easy argument shows that the inequalities can be rewritten with
$a_i=1$ and $\beta_i=0$ for all~$i$.
For the following discussion we will therefore assume
$$S\>=\Bigl\{\xi\in Q\colon\xi^{\gamma_i}<c_i\ (i=1,\dots,r)\Bigr\}$$
where $\gamma_i\in M$ and $c_i>0$ ($i=1,\dots,r$).

We use the notation introduced in \ref{dfnwv}. Let $v\in N$. If
$\bil{\gamma_i}v>0$ for all $i$ then $S(v)=Q$. If $\bil{\gamma_i}v
\ge0$ for all $i$ then $S\subset S(v)$. If $\bil{\gamma_i}v<0$ for
some~$i$ then $S(v)=\emptyset$.
So we see that
$$K(S)\>=\>K_0(S)\>=\>C_S^*\cap N$$
where $C_S:=\cone(\gamma_1,\dots,\gamma_r)\subset M_\R$ and $C_S^*
\subset N_\R$ is the dual cone of $C_S$.

The next lemma contains the reason why condition $(TC)$ can be met:
\end{lab}

\begin{lem}\label{binomkeyobs}%
Let $\rho\subset N_\R$ be a pointed cone satisfying $\ol S\cap
O_\rho(\R)\ne\emptyset$. Then $C_S^*\cap\relint(\rho)\ne
\emptyset$.
\end{lem}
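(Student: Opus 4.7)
The plan is a proof by contradiction using a separation argument for convex cones, followed by a direct contradiction with vanishing of characters on $O_\rho$.

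Assume, for contradiction, that $C_S^*\cap\relint(\rho)=\emptyset$. Since $\relint(\rho)$ is a nonempty relatively open convex set disjoint from the closed convex cone $C_S^*\subset N_\R$, the proper separation theorem for convex sets supplies a functional $\phi\in M_\R$ satisfying $\bil\phi v\ge0$ for all $v\in\rho$, $\bil\phi v\le0$ for all $v\in C_S^*$, and $\phi$ not identically zero on $\rho$. The three conditions translate to $\phi\in\rho^*$, $-\phi\in(C_S^*)^*=C_S$ (bipolar theorem, $C_S$ being closed and finitely generated), and $\phi\notin\rho^\perp$. Since $\rho$, $C_S$, and $\rho^\perp$ are all rational, one may arrange $\phi\in M_\Q$ with these properties; clearing denominators, I assume $\phi\in M$ and write $-\phi=\sum_{i=1}^r\lambda_i\gamma_i$ with $\lambda_i\in\Q_{\ge0}$.

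Next, I exploit the binomial description of $S$. For any $\xi\in S$ the inequality $\xi^{\gamma_i}<c_i$ and positivity of $\xi$ yield $(\xi^{\gamma_i})^{\lambda_i}\le c_i^{\lambda_i}$ (strict for each $i$ with $\lambda_i>0$), and multiplying gives
\[
\xi^{-\phi}\>=\>\prod_{i=1}^r(\xi^{\gamma_i})^{\lambda_i}\><\>\prod_{i=1}^r c_i^{\lambda_i}\>=:\>C.
\]
Equivalently, $\x^\phi(\xi)>1/C>0$ on $S$. Because $\phi\in M\cap\rho^*$, the character $\x^\phi$ extends to a regular function on $U_\rho$, so by continuity $\x^\phi\ge 1/C>0$ on $\ol S\cap U_\rho(\R)$. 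But $\phi\in\rho^*\setminus\rho^\perp$ forces $\x^\phi$ to vanish identically on $O_\rho$, contradicting the assumption $\ol S\cap O_\rho(\R)\ne\emptyset$.

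The only delicate step is the separation: it is essential that we use \emph{proper} separation applied to the relatively open set $\relint(\rho)$ (not merely to $\rho$), since that is what ensures the separating $\phi$ is nonzero on $\rho$, i.e.\ $\phi\notin\rho^\perp$. Everything after that is routine: translating between the cones $C_S$, $C_S^*$, $\rho$, $\rho^*$ via bipolarity, rationalising $\phi$, and exploiting that $\x^\phi$ is simultaneously bounded away from $0$ on $S$ and vanishes on $O_\rho$.
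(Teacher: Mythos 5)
Your argument is, at bottom, the contrapositive of the paper's proof. The paper shows directly that every lattice point of $\rho^*\cap(-C_S)$ (its $H_\tau\cap H_\rho$) must lie in $-H_\rho$, i.e.\ in $\rho^\perp$ --- using exactly your observation that $\x^{-\phi}$ is bounded above on $S$ while a character in $H_\rho\setminus(-H_\rho)$ vanishes on $O_\rho$ --- and then dualizes this inclusion to $-\rho\subset\rho+\tau$, which produces a point of $C_S^*\cap\relint(\rho)$. You instead assume $C_S^*\cap\relint(\rho)=\emptyset$, try to extract a single $\phi\in\rho^*\cap(-C_S)$ with $\phi\notin\rho^\perp$, and run the same boundedness-versus-vanishing computation to reach a contradiction. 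Everything downstream of the separation step (rationalizing $\phi$, writing $-\phi=\sum\lambda_i\gamma_i$, bounding $\x^{-\phi}$ on $S$, extending $\x^\phi$ to $U_\rho$, vanishing on $O_\rho$) is correct and matches the paper.

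The gap is in the separation step, which you rightly single out as the delicate point but justify incorrectly. The proper separation theorem (disjoint relative interiors imply a separating hyperplane not containing \emph{both} sets) does not guarantee that the hyperplane fails to contain $\rho$; it is entirely consistent with that theorem that the separating functional it produces lies in $\rho^\perp$ while merely failing to annihilate $C_S^*$ --- e.g.\ for $\rho=\cone(e_1)$ and $C_S^*=\cone(e_2)$ in $\R^2$, the functional $\phi=(0,-1)$ is a proper separator with $\phi\in\rho^\perp$. Replacing $\rho$ by $\relint(\rho)$ does not help: a relatively open convex set disjoint from another convex set can still be contained in every separating hyperplane (take a line through the origin and a point off it). What actually rescues the step is that $C_S^*$ is \emph{polyhedral} and is disjoint from $\relint(\rho)$ itself, not merely from $\relint(C_S^*)$; Rockafellar's polyhedral separation theorem (\emph{Convex Analysis}, Thm.~20.2) then yields a separating hyperplane not containing $\rho$. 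Equivalently, and more in the spirit of the paper, one can prove the needed existence by duality: if $\rho^*\cap(-C_S)\subset\rho^\perp$, then dualizing (all cones here are polyhedral, so Minkowski sums are closed) gives $\spn(\rho)\subset\rho-C_S^*$; writing $-u=v-w$ with $u\in\relint(\rho)$, $v\in\rho$, $w\in C_S^*$ yields $w=u+v\in C_S^*\cap\relint(\rho)$, contradicting your standing assumption. So the step is true and repairable, but the theorem you cite does not deliver the property you need, and the reason you give for it is not the operative one.
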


\begin{proof}
We may work in the toric affine variety $U_\rho=\Spec\R[H_\rho]$ with
$H_\rho=M\cap\rho^*$. Any point $\xi\in O_\rho(\R)$ satisfies
$\xi^\gamma=0$ for all $\gamma\in H_\rho\setminus(-H_\rho)$.
Let us write $\tau:=-C_S^*$, so that $H_\tau:=M\cap\tau^*$ is the
saturation inside $M$ of the semigroup generated by $-\gamma_1,\dots,
-\gamma_r$. Any $\beta\in H_\tau$ can be written in the form $\beta=
-\sum_{i=1}^rb_i\gamma_i$ with rational numbers $b_i\ge0$. Therefore,
there exists $c>0$ with $\xi^\beta>c$ for all $\xi\in S$.
Hence we have $\xi^\beta\ge c>0$ for any $\xi\in\ol S$, which implies
$\beta\notin H_\rho\setminus(-H_\rho)$. Thus $H_\tau\cap H_\rho
\subset-H_\rho$, or equivalently, by dualizing, $-\rho\subset
\rho+\tau$.
Choose any $u\in\relint(\rho)$. There exists $v\in\rho$ with $-u\in
v+\tau$, i.e.\ with $u+v\in-\tau=C_S^*$. This proves the lemma since
$u+v\in\relint(\rho)$.
\end{proof}

\begin{cor}\label{binomcond}%
Let $\Sigma$ be a complete fan in $N_\R$ which is adapted to $S$.
Then condition $(TC)$ from \ref{abstractsetup} is satisfied.
\end{cor}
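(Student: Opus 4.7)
The plan is to combine Lemma \ref{binomkeyobs} with the orbit-cone correspondence and the fact that, in the binomial setting, $K_0(S) = C_S^* \cap N$ is determined by the single rational cone $C_S^*$.

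Let $\tau \in \Sigma(1)$ with $\tau \not\subset \sigma$, and assume $\ol S \cap Y_\tau(\R) \ne \emptyset$; we must produce a lattice point in $K_0(S) \cap \relint(\tau)$. First I would invoke the orbit stratification $Y_\tau = \bigsqcup_{\rho \succeq \tau} O_\rho$ to pick a cone $\rho \in \Sigma$ having $\tau$ as a face and satisfying $\ol S \cap O_\rho(\R) \ne \emptyset$. Lemma \ref{binomkeyobs} then yields a vector $w \in C_S^* \cap \relint(\rho)$.

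Next I would exploit the adaptedness of $\Sigma$. Since $K_0(S) = C_S^* \cap N$ and $C_S^*$ is a rational polyhedral cone, the set of lattice points of $K_0(S)$ is dense in $C_S^*$ in the usual sense; the condition that $\Sigma$ is adapted (Lemma \ref{adaptfan}) therefore forces every cone of $\Sigma$ to be either entirely contained in $C_S^*$ or to have its relative interior disjoint from $C_S^*$. The existence of $w$ rules out the latter possibility for $\rho$, so $\rho \subset C_S^*$, and hence $\tau \subset \rho \subset C_S^*$.

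Finally, the primitive generator of the ray $\tau$ lies in $N \cap \relint(\tau) \subset N \cap C_S^* = K_0(S)$, giving the desired non-empty intersection $K_0(S) \cap \relint(\tau)$. The only step that requires any thought is the dichotomy in the middle paragraph — namely that a fan adapted to $S$ cannot have a cone that straddles the boundary of $C_S^*$ — but this is immediate once one unpacks the definition of adaptedness together with the fact that $C_S^*$ is rational.
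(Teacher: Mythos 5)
Your proof is correct and follows essentially the same route as the paper's: pass from $\ol S\cap Y_\tau(\R)\ne\emptyset$ to $\ol S\cap O_\rho(\R)\ne\emptyset$ for some $\rho\in\Sigma$ containing $\tau$, apply Lemma \ref{binomkeyobs} to get $C_S^*\cap\relint(\rho)\ne\emptyset$, and use adaptedness (i.e.\ that $C_S^*$ is a union of cones of $\Sigma$) to conclude $\tau\subset C_S^*$, whence the primitive generator of $\tau$ lies in $K_0(S)\cap\relint(\tau)$. You merely spell out the density/dichotomy step that the paper leaves implicit.
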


\begin{proof}
Adapted simply means here that $C_S^*$ is a union of cones from
$\Sigma$. The claim is clear from Lemma \ref{binomkeyobs}: If
$\tau\in\Sigma(1)$ satisfies $\ol S\cap Y_\tau(\R)\ne\emptyset$, then
$\ol S\cap O_\rho(\R)\ne\emptyset$ for some $\rho\in\Sigma$ containing
$\tau$. By Lemma \ref{binomkeyobs}, this implies $C_S^*\cap
\relint(\rho)\ne\emptyset$. By adaptedness, this implies $\tau\subset
C_S^*$.
\end{proof}

We conclude that an $S$-compatible toric completion exists whenever
$S$ is defined by binomial inequalities:

\begin{cor}\label{bdpolbinom}%
Let $\sigma$ be a pointed cone in $N_\R$, and let $S=\{\xi\in Q\colon
\xi^{\gamma_i}<c_i$ $(i=1,\dots,r)\}$ as before, considered as a
subset of $U_\sigma(\R)$. The ring of polynomials on $U_\sigma$ that
are bounded on $S$ is given by
$$B_{U_\sigma}(S)\>=\>\R[H]$$
where $H=M\cap\sigma^*\cap C_S$.
\qed
\end{cor}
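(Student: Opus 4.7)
The plan is to apply Proposition \ref{abstrfw} once we have set up a suitable complete fan and then to perform a straightforward duality computation to identify $|F_S|^\ast$ with $\sigma^\ast\cap C_S$.

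First, I would choose a complete fan $\Sigma$ in $N_\R$ which contains $\sigma$ as one of its cones and is adapted to $S$ in the sense of Lemma \ref{adaptfan}. Since $K(S)=K_0(S)=C_S^\ast\cap N$, being adapted amounts to requiring that $C_S^\ast$ is a union of cones of $\Sigma$. The existence of such a common refinement is a standard construction in toric geometry: start from any complete fan refining the face structure of $\sigma$, then subdivide further using the hyperplanes $\langle\gamma_i,-\rangle=0$. With $\Sigma$ in hand, Corollary \ref{binomcond} guarantees that the toric compatibility condition $(TC)$ holds, so Proposition \ref{abstrfw} applies and yields
\[
B_{U_\sigma}(S)\>=\>\R[M\cap|F_S|^\ast],
\]
where $F_S\subset\Sigma$ is the subfan described in \ref{abstractsetup}.

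It then remains to show $|F_S|^\ast=\sigma^\ast\cap C_S$. Because $\Sigma$ is adapted to $S$, a ray $\tau\in\Sigma(1)$ meets $K_0(S)=C_S^\ast\cap N$ in its relative interior if and only if $\tau\subset C_S^\ast$. Therefore $F_S$ is exactly the collection of cones in $\Sigma$ all of whose edges lie in $\sigma\cup C_S^\ast$. Since any such cone is the convex hull of its edges, one inclusion $|F_S|\subset\sigma+C_S^\ast$ is immediate; conversely, since $\sigma$ and $C_S^\ast$ are themselves unions of cones in $\Sigma$, each of these cones automatically has all edges in $\sigma\cup C_S^\ast$, so $\sigma\cup C_S^\ast\subset|F_S|$. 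Taking duals and using $(A\cup B)^\ast=(A+B)^\ast=A^\ast\cap B^\ast$ together with $C_S^{\ast\ast}=C_S$ yields
\[
|F_S|^\ast\>=\>(\sigma+C_S^\ast)^\ast\>=\>\sigma^\ast\cap C_S,
\]
which gives the claimed description $B_{U_\sigma}(S)=\R[M\cap\sigma^\ast\cap C_S]=\R[H]$.

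The only real subtlety is step one, the simultaneous choice of a complete fan containing $\sigma$ and adapted to $S$; everything after that is a formal consequence of Corollary \ref{binomcond}, Proposition \ref{abstrfw}, and the duality identity $(\sigma+C_S^\ast)^\ast=\sigma^\ast\cap C_S$. No new geometric input is required beyond what is already developed in the preceding lemmas.
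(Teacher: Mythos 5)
Your overall strategy is the one the paper intends (the corollary is stated with a bare \qed\ precisely because it is meant to follow from Corollary \ref{binomcond} and Proposition \ref{abstrfw}), and your duality computation is correct and worth making explicit: from $\sigma\cup C_S^\ast\subset|F_S|\subset\sigma+C_S^\ast$ one gets $|F_S|^\ast=(\sigma+C_S^\ast)^\ast=\sigma^\ast\cap C_S$ since $C_S$ is finitely generated, hence closed. The problem is the step you yourself flag as the only subtlety: a complete fan $\Sigma$ that simultaneously contains $\sigma$ as a cone and is adapted to $S$ (i.e.\ has $C_S^\ast$ as a union of its cones) need not exist, and your proposed construction does not produce one. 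Subdividing along the hyperplanes $\gamma_i^\perp$ cuts $\sigma$ itself whenever some $\gamma_i^\perp$ meets $\relint(\sigma)$, and in that case \emph{no} fan can do both jobs: if $\sigma\in\Sigma$ and $C_S^\ast$ is a union of cones of $\Sigma$, then either $\sigma\subset C_S^\ast$ or $\relint(\sigma)\cap C_S^\ast=\emptyset$. A concrete instance is $\sigma=\cone(e_1,e_2)$ in $N_\R=\R^2$ and $S=\{\xi_1,\xi_2>0,\ \xi_1\xi_2^{-1}<1\}$, where $C_S^\ast=\{v\colon v_1\ge v_2\}$ is a half-plane whose boundary line passes through $\inter(\sigma)$. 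Here the corollary's conclusion ($B_{\A^2}(S)=\R$) is true, but it cannot be obtained by quoting Corollary \ref{binomcond}, whose hypothesis is unsatisfiable together with $\sigma\in\Sigma$. Without $\sigma\in\Sigma$, $U_\sigma$ does not embed as an open toric subvariety of $X_\Sigma$ and Proposition \ref{abstrfw} is not applicable.

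The repair is to weaken the requirement on $\Sigma$: one only needs $C_S^\ast$ to be respected by the fan \emph{outside} $\inter(\sigma)$. For instance, take a complete fan $\Sigma''$ in which both $\sigma$ and $C_S^\ast$ are unions of cones, and coarsen it by replacing all cones lying in $\sigma$ by $\sigma$ and its faces, keeping only those $\rho\in\Sigma''$ with $\rho\cap\inter(\sigma)=\emptyset$ otherwise. Such a $\Sigma$ contains $\sigma$, but it is not adapted to $S$, so condition $(TC)$ must be checked directly from Lemma \ref{binomkeyobs} rather than via Corollary \ref{binomcond}: if $\tau\in\Sigma(1)$, $\tau\not\subset\sigma$, and $\ol S\cap Y_\tau(\R)\ne\emptyset$, then $\ol S\cap O_\rho(\R)\ne\emptyset$ for some $\rho\supset\tau$; such a $\rho$ is not a face of $\sigma$, hence comes from $\Sigma''$, hence $\relint(\rho)\cap C_S^\ast\ne\emptyset$ forces $\rho\subset C_S^\ast$ and so $\tau\subset C_S^\ast$. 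One still has $\sigma\cup C_S^\ast\subset|F_S|\subset\sigma+C_S^\ast$ (the first inclusion now because $\sigma\in F_S$ and every cone of $\Sigma''$ contained in $C_S^\ast$ and disjoint from $\inter(\sigma)$ lies in $F_S$, and these cover $C_S^\ast\setminus\inter(\sigma)$, while $C_S^\ast\cap\sigma\subset\sigma$), so your duality argument then finishes the proof. As written, however, your first step asserts the existence of an object that can fail to exist, and the appeal to Corollary \ref{binomcond} is therefore not justified.
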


\begin{lab}
A polynomial function $f\in\R[U_\sigma]$ is therefore bounded on $S$
if and only if for every monomial $m$ occuring in $f$, some power of
$m$ is a product of $\x^{\gamma_1},\,\dots,\,\x^{\gamma_r}$. It is
obvious that such $f$ is bounded on $S$; the content of Corollary
\ref{bdpolbinom} is that no other $f$ is bounded on $S$. In
particular, we see that $B_{U_\sigma}(S)=\R$ if and only if $\sigma+
C_S^*=N_\R$.
\end{lab}

\begin{lab}
For a second class of examples, let $U$ be a non-empty open \sa\
subset of $Q=\{\xi\in T(\R)=(\R^*)^n\colon\xi_i>0$
$(i=1,\dots,n)\}$, and let $v\in N$. We consider the open set
$$S\>:=\>S_v(U)\>:=\>\bigl\{\lambda_v(s)\xi\colon\xi\in U,\
0<s\le1\bigr\}$$
in $Q$, which we may call a $v$-tentacle, following
Netzer in \cite{ne}.  Multiplying $v$ by a positive integer does not
change $S$, therefore we may assume that $v$ is a primitive element of
$N$.
\end{lab}

\begin{lem}
Assume that $U$ is relatively compact in $Q$. Let $S=S_v(U)$ be the
associated $v$-tentacle as above.
\begin{itemize}
\item[(a)]
$K(S)=K_0(S)=\Z_\plus v$.
\item[(b)]
If $\{0\}\ne\rho\subset N_\R$ is a pointed cone with $\ol S\cap
O_\rho(\R)\ne\emptyset$, then $v\in\relint(\rho)$.
\end{itemize}
\end{lem}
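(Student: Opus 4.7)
For part (a), the plan is to prove the two inclusions separately. The inclusion $\Z_\plus v\subset K_0(S)$ is immediate from the definitions: if $w=kv$ with $k\ge 0$ and $\xi$ lies in the interior of $U$, then $\lambda_w(s)\xi=\lambda_v(s^k)\xi\in S$ for every $s\in(0,1]$, and the same holds on a whole neighbourhood of $\xi$, so $\xi\in\inter(S(w))$. For the reverse inclusion $K(S)\subset\Z_\plus v$, I would take $w\in K(S)$ and $\xi\in S(w)$; by definition, for every sufficiently small $s>0$ we can write $\lambda_w(s)\xi=\lambda_v(t_s)\eta_s$ with $\eta_s\in U$ and $t_s\in(0,1]$. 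Comparing the $i$th coordinates and taking logarithms (set $a_s=-\log s$ and $b_s=-\log t_s\ge 0$) yields
\[
v_ib_s-w_ia_s\>=\>\log(\eta_s)_i-\log\xi_i\qquad(i=1,\ldots,n),
\]
with the right-hand side uniformly bounded in $s$ because $U$ is relatively compact in $Q$. Since $a_s\to\infty$ and some $v_{i_0}\ne 0$ (as $v$ is primitive, hence non-zero), the $i_0$th equation forces $b_s/a_s\to c:=w_{i_0}/v_{i_0}\ge 0$; substituting back, $w_j=cv_j$ for all $j$. Primitivity of $v$ finally forces $c\in\Z_\plus$, so $w\in\Z_\plus v$.

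For part (b), I would pick a sequence $(s_k,\eta_k)\in(0,1]\times U$ with $\lambda_v(s_k)\eta_k\to\xi_0\in\ol S\cap O_\rho(\R)$. If $s_k$ did not converge to $0$, a subsequence would produce $\xi_0\in T(\R)=O_{\{0\}}(\R)$, contradicting $\rho\ne\{0\}$; hence $s_k\to 0$, and compactness of $\ol U$ allows $\eta_k\to\eta_\infty\in\ol U\subset Q$ after passing to a subsequence. For each $\alpha\in H_\rho=M\cap\rho^\ast$,
\[
(\lambda_v(s_k)\eta_k)^\alpha\>=\>s_k^{\bil\alpha v}\cdot\eta_k^\alpha,
\]
with $\eta_k^\alpha\to\eta_\infty^\alpha>0$. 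Convergence of the left side forces $\bil\alpha v\ge 0$ for every $\alpha\in H_\rho$, i.e.\ $v\in\rho$. Since $\xi_0\in O_\rho(\R)$, the limit is non-zero precisely when $\alpha\in\rho^\perp$, so $\bil\alpha v>0$ for every $\alpha\in H_\rho\setminus\rho^\perp$; by rationality of $\rho$ this extends to all $\alpha\in\rho^\ast\setminus\rho^\perp$, which is exactly the defining condition for $v\in\relint(\rho)$.

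The main technical obstacle is the coordinate/logarithm bookkeeping in part (a): the sign constraints ($b_s\ge 0$, then $c\ge 0$, and finally $c\in\Z_\plus$) must be tracked carefully to extract the rigid conclusion $w=cv$, and one has to use the full force of the hypothesis that $\ol U$ is compact in $Q$ (i.e.\ bounded away from every boundary hyperplane $\xi_i=0$) to control all $n$ coordinates simultaneously. Part (b) is essentially formal once one recalls the characterization $\relint(\rho)=\{v\in\rho:\bil\alpha v>0\text{ for all }\alpha\in\rho^\ast\setminus\rho^\perp\}$ of the relative interior of a pointed rational cone.
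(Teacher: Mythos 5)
Your proof is correct and follows essentially the same route as the paper: in both parts the key step is to evaluate characters on the identity $\lambda_w(s)\xi=\lambda_v(t)\eta$ (resp.\ on points $\lambda_v(s_k)\eta_k$ approaching $O_\rho$) and to use that monomials are bounded away from $0$ and $\infty$ on the relatively compact set $U$. The only cosmetic differences are that the paper settles (a) by a contradiction using a single separating character $\alpha$ with $\bil\alpha u>0>\bil\alpha v$ instead of your coordinate-wise logarithm bookkeeping, and obtains $v\in\relint(\rho)$ in (b) by dualizing to $-\rho\subset\rho-\R_\plus v$ rather than invoking the direct characterization of the relative interior.
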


\begin{proof}
(a)
We obviously have $U\subset S(v)$, and therefore $v\in K_0(S)$.
Conversely let $u\in N$ with $S(u)\ne\emptyset$. So there is $\xi\in
Q$ such that $\lambda_u(s)\xi\in S$ for all sufficiently small real
$s>0$. Thus, for any small $s>0$ there exist $0<t\le1$ and $\eta\in
U$ with $\lambda_u(s)\xi=\lambda_v(t)\eta$. Assume $u\notin\R_\plus
v$. Then there exists $\alpha\in M$ with $\bil\alpha u>0>\bil\alpha
v$. Evaluating the character $\x^\alpha$ we get $s^{\bil\alpha u}
\xi^\alpha=t^{\bil\alpha v}\eta^\alpha\ge\eta^\alpha$. The right hand
side is positive and bounded away from zero, since $\x^\alpha$
does not approach zero on $U$. On the other hand, the left hand side
tends to zero for $s\to0$. This contradiction proves the claim.

(b)
The proof is similar to that of Lemma \ref{binomkeyobs}. Again
we may work in the affine toric variety $U_\rho$. Let $\gamma\in
H_\rho\setminus(-H_\rho)$. For any $\xi\in U_\rho(\R)$, we have
$\xi^\gamma=0$. Since $U$ is relatively compact, there exists $c>1$
with $c^{-1}\le\xi^\gamma\le c$ for all $\xi\in U$. We have
$\x^\gamma(\lambda_v(s)\xi)=s^{\bil\gamma v}\cdot\xi^\gamma$ for
$s>0$, and we conclude $\bil\gamma v>0$.
Thus $\bil\gamma v>0$ holds for every $\gamma\in H_\rho\setminus
(-H_\rho)$. This means $M\cap(-\R_\plus v)^*\cap\rho^*\subset
-\rho^*$, or $-\rho\subset\rho-\R_\plus v$ after dualizing. As before,
this implies $v\in\relint(\rho)$.
\end{proof}

Similarly to Proposition \ref{bdpolbinom}, we deduce:

\begin{cor}\label{bdpoltent}%
Let $U\ne\emptyset$ be an open and relatively compact subset of $Q$,
let $S=S_v(U)$ be the associated $v$-tentacle. Let $\sigma$ be a
pointed cone in $N_\R$. The ring of polynomials on $U_\sigma$ that
are bounded on $S$ is $B_{U_\sigma}(S)=\R[H]$ where $H=M\cap\sigma^*
\cap(\R_\plus v)^*$.
\qed
\end{cor}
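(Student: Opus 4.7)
The proof follows the template of Corollary \ref{bdpolbinom}: construct an adapted fan, verify the toric compatibility condition $(TC)$ from \ref{abstractsetup}, apply Proposition \ref{abstrfw}, and finish with a duality computation. The case $v=0$ is degenerate (then $S=U$ is relatively compact, every polynomial is bounded on $S$, and $(\R_\plus v)^*=N_\R$), so we may assume $v\ne 0$.

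By Lemma \ref{adaptfan} and further subdivision, we choose a complete fan $\Sigma$ in $N_\R$ that is adapted to $S$, contains $\sigma$ as a cone, contains $\R_\plus v$ as a ray, and subdivides the cone $\sigma+\R_\plus v$ into subcones whose rays each lie in $\sigma$ or equal $\R_\plus v$. To verify $(TC)$, fix $\tau\in\Sigma(1)$ with $\tau\not\subset\sigma$ and assume $\ol S\cap Y_\tau(\R)\ne\emptyset$. Using the standard toric decomposition of $Y_\tau$ as the disjoint union $\bigsqcup O_\rho$ over the cones $\rho\in\Sigma$ having $\tau$ as a face, pick such a $\rho$ with $\ol S\cap O_\rho(\R)\ne\emptyset$. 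Part (b) of the preceding lemma forces $v\in\relint(\rho)$. Since $\R_\plus v$ is already a cone of $\Sigma$ whose relative interior contains $v$, and the relative interiors of distinct cones in a fan are disjoint, $\rho=\R_\plus v$. Hence $\tau$, being a face of a one-dimensional cone, equals $\R_\plus v$ itself, and $K_0(S)\cap\relint(\tau)=\{kv\colon k\ge1\}\ne\emptyset$ by part (a).

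Proposition \ref{abstrfw} now yields $B_{U_\sigma}(S)=\R[M\cap|F_S|^*]$, so it suffices to show $|F_S|=\sigma+\R_\plus v$. The inclusion $|F_S|\subset\sigma+\R_\plus v$ is immediate from the definition of $F_S$, since every ray of every cone of $F_S$ lies in $\sigma$ or equals $\R_\plus v$. The reverse inclusion is built into our choice of $\Sigma$: the subdivision of $\sigma+\R_\plus v$ described above consists of cones whose rays lie in $\sigma\cup\{\R_\plus v\}$, and such cones belong to $F_S$ by construction. Dualizing, $|F_S|^*=\sigma^*\cap(\R_\plus v)^*$, which gives the desired formula. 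The subtle point is arranging for $\R_\plus v$ to appear as an actual ray of $\Sigma$, so that the disjoint-interior argument pins down $\rho=\R_\plus v$; this is possible precisely because $K_0(S)=\Z_\plus v$ is supported on this single ray, and any adapted fan may be further refined to include it while remaining adapted to $S$.
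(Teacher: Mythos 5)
Your overall strategy is exactly the intended one (the paper leaves this corollary to the reader, saying only ``similarly to the binomial case''): put $\R_\plus v$ into the fan so that part (b) of the tentacle lemma, combined with the fact that relative interiors of distinct cones of a fan are disjoint, pins down $\rho=\R_\plus v$ and hence verifies $(TC)$; then apply Proposition \ref{abstrfw} and dualize. That part of your argument is correct.

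There is, however, one genuine gap: your fan cannot always be built. If $v\in\sigma$ but $\R_\plus v$ is not an extremal ray of $\sigma$ (e.g.\ $v\in\relint(\sigma)$ with $\dim\sigma\ge2$), then no fan contains both $\sigma$ and $\R_\plus v$ as cones, since $\sigma\cap\R_\plus v=\R_\plus v$ would have to be a face of $\sigma$. Your construction, and with it the step ``$\rho=\R_\plus v$'', breaks down in this case. The case is easy to repair --- for $v\in\sigma$ every $\alpha\in M\cap\sigma^*$ satisfies $\bil\alpha v\ge0$, so $\x^\alpha(\lambda_v(s)\xi)=s^{\bil\alpha v}\xi^\alpha$ is bounded on $S$ and $B_{U_\sigma}(S)=\R[M\cap\sigma^*]=\R[H]$ directly; alternatively, for any complete fan containing $\sigma$ one checks that $v\in\relint(\rho)$ forces $\rho\preceq\sigma$, so $(TC)$ holds vacuously and $|F_S|=\sigma$ --- but it must be addressed. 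Two smaller remarks: in the degenerate case $v=0$ the dual cone is $M_\R$, not $N_\R$; and your insistence that $\Sigma$ subdivide $\sigma+\R_\plus v$ using only rays from $\sigma\cup\{\R_\plus v\}$ (so that $|F_S|=\sigma+\R_\plus v$ on the nose) is unnecessary: from $\sigma,\R_\plus v\in F_S$ and $|F_S|\subset\sigma+\R_\plus v$ one already gets $\sigma\cup\R_\plus v\subset|F_S|\subset\sigma+\R_\plus v$, and all three sets have the same dual cone $\sigma^*\cap(\R_\plus v)^*$, which is all the computation requires.
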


\begin{lab}
Thus a polynomial function $f\in\R[U_\sigma]$ is bounded on $S$ if
and only if every monomial $\x^\alpha$ occuring in $f$ satisfies
$\bil\alpha v\ge0$. In particular, $B_{U_\sigma}(S)=\R$ is equivalent
to $\sigma+\R_\plus v=N_\R$.
\end{lab}


\section{Iitaka dimension on toric surfaces}\label{sec:trdeg}

Let $X$ be a non-singular projective surface over a field $k$. We
always assume that $X$ is absolutely irreducible. We first discuss
how the intersection matrix $A_D$ of an effective divisor $D$ on $X$
relates to the Iitaka dimension $\kappa(D)$ of $D$. Since $\kappa(D)$
is the transcendence degree of $\scrO(X\setminus D)$, these facts
have implications for rings of bounded polynomials on 2-dimensional
semi-algebraic sets, by Theorem \ref{ExistenceCompletionSurfaces}. In
general, the intersection matrix does not uniquely determine
$\kappa(D)$. However when $X$ is a toric surface and the divisor $D$
is toric, we show that $\kappa(D)$ can be read off from $A_D$ in a
simple manner (Proposition \ref{zsfsg}).

\begin{lab}
Given two divisors $D$, $D'$ on $X$, we denote by $D\mal D'$ the
intersection number of $D$ and $D'$. The intersection pairing is
invariant under linear equivalence and therefore induces a bilinear
pairing on the divisor class group $\Pic(X)$. As usual, we write
$D^2:=D\mal D$ for the self intersection number of $D$.
\end{lab}

\begin{dfn}
Let $D$ be an effective (not necessarily reduced) divisor on $X$
whose distinct irreducible components are $C_1,\dots,C_r$. We define
the \emph{intersection matrix} of $D$ to be the symmetric $r\times r$
matrix with integer entries $C_i\mal C_j$ ($i,\,j=1,\dots,r$), c.f.\
\cite{ii} 8.3. It will be denoted by $A_D$.
\end{dfn}

\begin{lab}
Let $D$ be an effective divisor on $X$. For $m\ge1$ let $\phi_m\colon
X\ratto|mD|$ be the rational map associated to the complete linear
series $|mD|$. The \emph{Iitaka dimension} of $D$ is defined to be
$$\kappa(X,D)\>:=\>\max_{m\ge1}\>\dim\phi_m(X),$$
see \cite{ii} Sect.~10.1 or \cite{la} 2.1.3.
It is well known that $\kappa(D,X)$ is equal to the transcendence
degree of $\scrO(X\setminus D)$, the ring of regular functions on the
open subvariety $X\setminus D:=X\setminus\supp(D)$ of $X$ (see
\cite{ii} Prop.~10.1).
The Iitaka dimension of $D$ is closely related to the intersection
matrix $A_D$:
\end{lab}

\begin{prop}\label{signintmat}%
\hfil
\begin{itemize}
\item[(a)]
If $A_D$ is negative definite, then $\kappa(X,D)=0$, i.e.\
$\scrO(X\setminus D)=k$.
\item[(b)]
If $D^2>0$ then $\kappa(X,D)=2$.
\end{itemize}
\end{prop}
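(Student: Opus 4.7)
My plan is to treat the two parts by separate classical arguments: (a) via a positivity obstruction clashing with negative definiteness, and (b) via a Riemann--Roch asymptotic estimate.

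For (a), I would suppose for contradiction that there exists a non-constant $f\in\scrO(X\setminus D)$. Its polar divisor $E:=(f)_\infty$ is then a nonzero effective divisor supported on $\supp(D)$, so I may write $E=\sum_i a_i C_i$ with $a_i\ge 0$, the sum running over the components $C_i$ of $D$. Since $(f)$ is principal, $(f)\cdot C_j=0$ for each~$j$, giving $E\cdot C_j=(f)_0\cdot C_j$. For those $j$ with $a_j>0$, the curve $C_j$ appears in $(f)_\infty$ and hence not in the zero divisor $(f)_0$; thus $(f)_0\cdot C_j\ge 0$ (intersection of effective divisors without common components). Therefore
\[
E^2 \;=\; \sum_j a_j\,(E\cdot C_j)\;\ge\;0.
\]
But negative definiteness of $A_D$ forces $E^2<0$ for the nonzero effective $E$ supported on $\supp(D)$, contradicting the above. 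Hence $\scrO(X\setminus D)=k$ and $\kappa(X,D)=0$.

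For (b), Riemann--Roch on the surface yields
\[
\chi(X,mD)\;=\;\chi(\scrO_X)+\tfrac{1}{2}\,mD\cdot(mD-K_X)\;=\;\tfrac{D^2}{2}\,m^2+O(m),
\]
whose leading coefficient is strictly positive. To isolate $h^0$ I need to control $h^2(X,mD)=h^0(X,K_X-mD)$ via Serre duality: fixing any ample divisor $H$ on $X$, I note that $D\cdot H>0$ since $D$ is a nonzero effective divisor (as $D^2>0$), so $(K_X-mD)\cdot H<0$ and $K_X-mD$ cannot be effective for $m\gg 0$. Thus $h^2(X,mD)=0$ for such $m$, and $h^0(X,mD)\ge\chi(X,mD)$ grows like $\tfrac{D^2}{2}m^2$. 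Since the Iitaka dimension equals the polynomial growth exponent of $h^0(mD)$ and is bounded by $\dim X=2$, this forces $\kappa(X,D)=2$.

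The delicate point in (a) is the sign bookkeeping: one must use crucially that the zero and pole divisors of $f$ have disjoint supports, so that $(f)_0\cdot C_j\ge 0$ holds for every component $C_j$ of $(f)_\infty$. In (b) the only subtle step is killing $h^2$, which the positivity of $D\cdot H$ accomplishes. Neither argument uses the toric hypothesis; the sharper toric statement, where the \emph{signature} of $A_D$ alone determines $\kappa$, is the content of Proposition~\ref{zsfsg}.
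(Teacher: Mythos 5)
Your proof is correct, but it takes a different route from the paper, which simply cites Iitaka \cite{ii} (Proposition 8.5 for (a), Lemma 8.5 for (b)) and remarks that (a) also follows from Proposition~\ref{negintmat}. For (a) you give the classical ``Zariski lemma'' argument: a non-constant $f\in\scrO(X\setminus D)$ has nonzero polar divisor $E$ supported on $\supp(D)$, and the disjointness of the supports of $(f)_0$ and $(f)_\infty$ forces $E^2\ge0$, contradicting negative definiteness. This is clean and self-contained (note that $f$ extends to a rational function on $X$ because $X\setminus D$ is dense, and $E\ne0$ because $X$ is complete and irreducible), but it yields only $\scrO(X\setminus D)=k$; the paper's alternative route via Proposition~\ref{negintmat} is deliberately stronger, using the rigidity Lemma~\ref{rigid} to show that $H^0(X\setminus D,\scrO(L))$ is finite-dimensional for \emph{every} line bundle $L$ --- a refinement that is actually needed later (Proposition~\ref{totstabnegintmat}), so your argument could not replace \ref{negintmat}, only part (a) of \ref{signintmat}. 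For (b) your Riemann--Roch argument is the standard proof of the cited lemma: the only points requiring care are killing $h^2$ via $D\cdot H>0$ for an ample $H$ (valid since $D$ is effective and nonzero --- effectivity is a standing hypothesis of the section, and $D\ne0$ follows from $D^2>0$) and invoking the characterization of $\kappa$ by the growth rate of $h^0(mD)$, both of which you handle correctly.
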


\begin{proof}
(a) is \cite{ii} Proposition 8.5, (b) is Lemma 8.5.
Assertion (a) is also a consequence of Proposition \ref{negintmat}
below.
\end{proof}

\begin{cor}\label{posev2tr2}%
If $\scrO(X\setminus D)$ has transcendence degree $\le1$, then $A_D$
is negative semidefinite.
\end{cor}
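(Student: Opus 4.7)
I would prove the contrapositive: assuming $A_D$ is not negative semidefinite, deduce that $\scrO(X\setminus D)$ has transcendence degree at least $2$. The strategy is to construct an effective divisor $D'$ supported inside $\supp(D)$ with $(D')^2>0$, apply Proposition~\ref{signintmat}(b) to conclude $\kappa(X,D')=2$, and then compare $\scrO(X\setminus D')$ with $\scrO(X\setminus D)$.

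Since $A_D$ is not negative semidefinite, there is a vector $(a_1,\dots,a_r)\in\R^r$ with $\sum_{i,j}a_ia_j\,C_i\mal C_j>0$; by continuity, such a vector exists with rational entries, and after clearing denominators we may take the $a_i\in\Z$. Decompose
\[
Z\>:=\>\sum_ia_iC_i\>=\>A-B,\qquad A\>:=\>\sum_{a_i>0}a_iC_i,\quad B\>:=\>-\sum_{a_i<0}a_iC_i,
\]
so that $A$ and $B$ are effective divisors with no common irreducible component among the $C_i$, and set $D':=A+B$. The polarisation identity combined with the fact that any two distinct irreducible curves on the smooth surface $X$ meet with non-negative intersection number (so $A\mal B\ge0$) yields
\[
(D')^2\>=\>(A+B)^2\>=\>(A-B)^2+4\,A\mal B\>=\>Z^2+4\,A\mal B\>\ge\>Z^2\>>\>0.
\]
Thus $D'$ is an effective divisor with $\supp(D')\subseteq\supp(D)$ and $(D')^2>0$.

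By Proposition~\ref{signintmat}(b) applied to $D'$, we have $\kappa(X,D')=2$, so $\scrO(X\setminus D')$ has transcendence degree~$2$ over $k$. The inclusion $X\setminus D\subseteq X\setminus D'$ of open subvarieties induces an injection of $k$-subalgebras $\scrO(X\setminus D')\into\scrO(X\setminus D)$ of the function field $k(X)$, so $\scrO(X\setminus D)$ has transcendence degree at least $2$, contradicting the hypothesis. The only non-routine step is the passage from the ``signed'' combination $Z$ with $Z^2>0$ to an actual effective divisor with positive self-intersection; this is exactly the role of the absolute-value trick $D'=A+B$, and it depends crucially on the non-negativity of $A\mal B$, which in turn uses the distinctness and irreducibility of the components $C_i$.
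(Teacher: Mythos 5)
Your proof is correct and is essentially the paper's own argument: the paper likewise passes from a vector with $x^{T}A_Dx>0$ to a non-negative integer combination $\sum_i m_iC_i$ with positive self-intersection, using the non-negativity of the off-diagonal entries of $A_D$ (your polarisation identity $(A+B)^2=(A-B)^2+4\,A\mal B$ is just an explicit form of that step), and then invokes Proposition~\ref{signintmat}(b). Your write-up merely makes explicit the final comparison $\scrO(X\setminus D')\into\scrO(X\setminus D)$ that the paper leaves implicit.
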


In particular, $\kappa(X,D)$ is determined by the Sylvester signature
of $A_D$ whenever $A_D$ is non-singular.

\begin{proof}
Let $C_1,\dots,C_r$ be the irreducible components of $D$. The
intersection matrix $A_D$ has non-negative off-diagonal entries.
Therefore, if $A_D$ has a positive eigenvalue, there exist integers
$m_i\ge0$ with $(\sum_im_iC_i)^2>0$.
By Proposition \ref{signintmat}(b), this implies
$\trdeg\scrO(X\setminus D)=2$.
\end{proof}

Part (a) of \ref{signintmat} can be generalized as follows:

\begin{prop}\label{negintmat}%
Let $D\subset X$ be an effective divisor whose intersection matrix
$A_D$ is negative definite. Then for any line bundle $L$ on $X$, the
space $H^0(X\setminus D,\,\scrO(L))$ is a finite-dimensional
$k$-vector space.
\end{prop}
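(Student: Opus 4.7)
The plan is to realize $H^0(X\setminus D,\scrO(L))$ as an ascending union of finite-dimensional vector spaces and show the union stabilizes. Write $C_1,\dots,C_r$ for the irreducible components of $D$, and for $\mathbf m=(m_1,\dots,m_r)\in\N^r$ put $E_{\mathbf m}=\sum_im_iC_i$ and $V_{\mathbf m}=H^0(X,\scrO(L+E_{\mathbf m}))$. Since $X$ is regular, any section of $\scrO(L)$ that is regular on $X\setminus D$ extends to a rational section on $X$ with finite pole orders along the $C_i$, so $H^0(X\setminus D,\scrO(L))=\bigcup_{\mathbf m}V_{\mathbf m}$, and each $V_{\mathbf m}$ is finite-dimensional by properness of $X$. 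It thus suffices to find $\mathbf m^*\in\N^r$ with $V_{\mathbf m}\subset V_{\mathbf m^*}$ for every $\mathbf m$.

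The mechanism is standard: if $M$ is a line bundle and $C$ an irreducible curve on $X$ with $M\mal C<0$, then every global section of $M$ vanishes along $C$, for otherwise the zero divisor of such a section would be effective, linearly equivalent to $M$, and not contain $C$, forcing $M\mal C\ge0$. Applied here, this gives $V_{\mathbf m+e_j}=V_{\mathbf m}$ whenever $(L+E_{\mathbf m}+C_j)\mal C_j<0$, so the problem reduces to constructing a cofinal chain along which this intersection inequality holds at every step.

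I would produce such a chain by exploiting the $M$-matrix structure of $-A_D$: since distinct irreducible curves on a surface intersect non-negatively, $-A_D$ has non-positive off-diagonal entries; combined with negative definiteness this makes $-A_D$ a symmetric $M$-matrix, so $(-A_D)^{-1}\ge0$. Consequently there is a vector $\mathbf w\in\N^r$ with strictly positive entries and $A_D\mathbf w$ componentwise strictly negative; set $\alpha_j:=-(A_D\mathbf w)_j>0$. A short bookkeeping, using $f_jC_j^2\le0$ and $f_iC_i\mal C_j\le w_iC_i\mal C_j$ for $i\ne j$, shows that whenever $\mathbf m=n\mathbf w+\mathbf f$ with $0\le f_i\le w_i$,
\[
(L+E_{\mathbf m}+C_j)\mal C_j\>\le\>L\mal C_j-(n+1)\alpha_j+(1-w_j)C_j^2,
\]
which is strictly negative once $n\ge n_0$ for some $n_0$ depending only on $L$, $\mathbf w$, and the numbers $C_i\mal C_j$.

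Given any $\mathbf m$, I would then pick $n\ge n_0$ with $\mathbf m\le n\mathbf w$ and connect $n\mathbf w$ to $(n+1)\mathbf w$ by successively adding unit vectors $e_j$; by the previous inequality every such step is an equality of $V$-spaces, so $V_{\mathbf m}\subset V_{n\mathbf w}=V_{n_0\mathbf w}$, and $\mathbf m^*:=n_0\mathbf w$ does the job. The main obstacle I foresee is ensuring that the intersection bound remains uniformly negative along every possible incremental path, regardless of which coordinate is incremented next; this is precisely what forces $\mathbf w$ to have strictly positive components and motivates the $M$-matrix input.
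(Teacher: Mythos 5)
Your proposal is correct and takes essentially the same route as the paper: the positive integer vector $\mathbf w$ with $A_D\mathbf w$ componentwise negative is produced in the paper's Lemma \ref{posvector} (via a cone-duality argument that does not even use the non-negativity of the off-diagonal entries, where you invoke $M$-matrix inverse-positivity), and your incremental chain with the bookkeeping over $0\le f_i\le w_i$ is precisely the induction in the paper's Lemma \ref{rigid}. The stabilization of the ascending union $\bigcup_{\mathbf m}H^0(X,\scrO(L+E_{\mathbf m}))$ then concludes exactly as in the paper's proof of Proposition \ref{negintmat}.
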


For the proof we need two lemmas.

\begin{lem}\label{rigid}%
Let $D$ be an effective divisor with irreducible components
$C_1,\dots,C_r$, and assume $C_i\mal D<0$ for $i=1,\dots,s$. Then for
any divisor $E$ there exists an integer $n_0=n_0(E)$ such that
$$|E+nD|\>=\>(n-n_0)D+|E+n_0D|$$
holds for all $n\ge n_0$.
\end{lem}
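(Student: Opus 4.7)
The inclusion $(n-n_0)D+|E+n_0D|\subseteq|E+nD|$ is tautological---one just adds $(n-n_0)D$ to any member of the smaller system---so the substance of the lemma is the reverse inclusion: I must exhibit some $n_0$, depending only on $E$ and $D$, which forces every effective divisor $F\sim E+nD$ to satisfy $F\ge(n-n_0)D$.

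The mechanism is intersection-theoretic. For each $i\le s$ the hypothesis gives
$$F\cdot C_i\>=\>E\cdot C_i+n(D\cdot C_i)\>\to\>-\infty\qquad(n\to\infty),$$
so each such $C_i$ is necessarily a component of $F$, and its multiplicity is forced to grow with $n$. To make this quantitative I would decompose $F=\sum_j a_j(F)\,C_j+F''$, where $F''\ge 0$ shares no component with $D=\sum_j d_j\,C_j$, set $b_j:=a_j(F)-(n-n_0)d_j$, and substitute into $F\cdot C_i=\sum_j a_j(F)(C_j\cdot C_i)+F''\cdot C_i$. Since $F''\cdot C_i\ge 0$ for $i\le s$, the terms linear in $n$ cancel and one obtains the $n$- and $F$-independent system
$$\sum_j b_j\,(C_j\cdot C_i)\>\le\>E\cdot C_i+n_0(D\cdot C_i)\qquad(i=1,\dots,s).$$

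The final step converts these inequalities into componentwise lower bounds on the $b_j$. Here the sign structure of the intersection matrix is essential: distinct irreducible curves on a non-singular surface intersect nonnegatively, so the principal submatrix $A:=(C_i\cdot C_j)_{i,j\le s}$ has nonnegative off-diagonal entries, while the hypothesis $C_i\cdot D<0$ produces a positive vector $x$ with $-Ax>0$, so that $-A$ is an $M$-matrix and $A^{-1}$ has nonpositive entries. The displayed inequalities then translate into a uniform lower bound on each $b_j$ ($j\le s$) depending only on $E$, $D$ and $n_0$; choosing $n_0$ large enough makes $b_j\ge 0$ for $j\le s$, while indices $j>s$ are handled by combining $a_j(F)\ge 0$ with the control just obtained on the $C_i$-components, possibly after enlarging $n_0$ once more.

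The main obstacle I anticipate is precisely this $M$-matrix step: deducing componentwise lower bounds from finitely many scalar inequalities relies on the nonpositivity of $A^{-1}$, and a little care is needed both to verify the required positivity $-Ax>0$ from the assumption $C_i\cdot D<0$ and to handle the indices $j>s$ cleanly in the case $s<r$.
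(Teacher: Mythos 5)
Your argument is sound in the case $s=r$ (every component of $D$ meets $D$ negatively), which is the only case the paper ever uses — in the proof of Proposition~\ref{negintmat} the multiplicities $m_i$ are chosen precisely so that $C_i\cdot D<0$ for \emph{all} $i$ — but it reaches the conclusion by a genuinely different route. The paper iterates the fixed-component principle: after choosing $n$ so large that $C_i\cdot\bigl(E+nD+\sum_ja_jC_j\bigr)<0$ for all $i$ and all tuples $0\le a_j\le m_j$, it peels off one copy of some $C_i$ at a time by induction on $\sum_ja_j$, obtains $|E+(n+1)D|=|E+nD|+D$, and iterates; nothing beyond ``an effective divisor meeting $C_i$ negatively contains $C_i$'' is needed. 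You instead bound all multiplicities at once: from $Ab\le c_E+n_0c_D$ with $A$ having nonnegative off-diagonal entries and $Am=c_D<0$ for the positive multiplicity vector $m$, the matrix $-A$ is a nonsingular M-matrix, so $A^{-1}\le 0$ entrywise and hence $b\ge A^{-1}c_E+n_0m$, which is componentwise nonnegative once $n_0\gg0$ because $m>0$. This is correct (note that applying $A^{-1}\le0$ to the inequality $Ab\le c$ reverses it, as required, and that invertibility of $A$ comes for free from the M-matrix criterion, so no negative definiteness is needed). Your version buys an explicit, one-shot value of $n_0$ and a cleaner quantitative statement, at the price of importing the inverse-positivity of M-matrices; the paper's version is more elementary and self-contained.

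One caveat: your plan to ``handle the indices $j>s$'' when $s<r$ cannot be carried out, because the statement is actually false in that generality. On the blow-up of $\P^2$ at a point $p$, take $D=C_1+C_2$ with $C_1$ the exceptional curve and $C_2$ a line missing $p$; then $C_1\cdot D=-1<0$ while $C_2\cdot D=1$, and $|nD|=nC_1+|nH|$ has members with multiplicity $0$ along $C_2$, so $|nD|\ne(n-n_0)D+|n_0D|$ for any $n_0<n$. The paper's own proof tacitly assumes $s=r$ as well (its choice of $n$ requires $C_i\cdot D<0$ for every $i$), so this is a defect of the statement rather than of your method; restrict to $s=r$ and drop the promised patch.
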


\begin{proof}
Say $D=\sum_{i=1}^rm_iC_i$, with $m_i\ge1$. Choose an integer $n$
such that the inequality
\begin{equation}\label{notigeungl}%
n\,(C_i\mal D)\><\>-C_i\mal\bigl(E+a_1C_1+\cdots+a_sC_s\bigr)
\end{equation}
holds for $i=1,\dots,r$ and every tuple $(a_1,\dots,a_r)$ with
$0\le a_j\le m_j$ ($j=1,\dots,r$). Then we claim
$$|E+(n+1)D|\>=\>|E+nD|+D.$$
Indeed, if $a_1,\dots,a_r$ are integers with $0\le a_j\le m_j$
($j=1,\dots,r)$, we show
$$\Bigl|E+nD+\sum_ja_jC_j\Bigr|\>=\>|E+nD|+\sum_ja_jC_j$$
by induction on $\sum_ja_j$. The assertion is trivial for $\sum_j
a_j=0$. If $(a_1,\dots,a_s)\ne(0,\dots,0)$ is a tuple with $0\le
a_j\le m_j$, and if $i$ is an index with $a_i\ge1$, we have
$$C_i\mal\Bigl(E+nD+\sum_ja_jC_j\Bigr)\><\>0$$
by \eqref{notigeungl}. Any effective divisor linearly equivalent to
$E+nD+\sum_ja_jC_j$ must therefore contain $C_i$, which implies
$$\Bigl|E+nD+\sum_ja_jC_j\Bigr|\>=\>\Bigl|-C_i+E+nD+\sum_ja_jC_j
\Bigr|+C_i,$$
and so
$$\Bigl|E+nD+\sum_ja_jC_j\Bigr|\>=\>|E+nD|+\sum_ja_jC_j$$
by the inductive hypothesis.
\end{proof}

\begin{lem}\label{posvector}%
If $x_1,\dots,x_r$ is a linear basis of $\R^r$, there exist integers
$m_1,\dots,m_r\ge1$ such that $x=\sum_im_ix_i$ satisfies $\bil x{x_i}
>0$ for all $i=1,\dots,r$.
\end{lem}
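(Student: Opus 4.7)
My plan is to reformulate the lemma via the Gram matrix $G$ with entries $G_{ij}=\bil{x_i}{x_j}$. Since $x_1,\dots,x_r$ is a basis of $\R^r$, $G$ is symmetric positive definite. Writing $x=\sum_j m_j x_j$, the condition $\bil x{x_i}>0$ becomes $(Gm)_i>0$, so the task reduces to finding $m\in\Z_{\ge 1}^r$ with $Gm$ componentwise strictly positive. The solution set $\{m\in\R^r:Gm>0\}$ is an open cone in $\R^r$, so once it is shown to intersect the open positive orthant $\R^r_{>0}$, density of rationals and clearing denominators will immediately produce such an $m$ in $\Z_{\ge 1}^r$. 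Thus everything reduces to showing $G(\R^r_{>0})\cap\R^r_{>0}\ne\emptyset$.

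For this real existence statement I would argue by contradiction via Hahn--Banach separation. If the convex cone $G(\R^r_{\ge 0})$ were disjoint from the open convex cone $\R^r_{>0}$, a separating hyperplane through the origin would provide a nonzero vector $c\in\R^r$ with $c^\top v\ge 0$ for all $v\in\R^r_{\ge 0}$ (forcing $c\ge 0$ componentwise) and $c^\top Gm\le 0$ for all $m\in\R^r_{\ge 0}$ (forcing $Gc\le 0$ componentwise, using symmetry of $G$). Then $\bil{Gc}c=\sum_i c_i(Gc)_i\le 0$, contradicting positive definiteness of $G$ since $c\ne 0$. This yields a nonnegative $m$ with $Gm>0$, and replacing $m$ by $m+\epsilon(1,\dots,1)$ for $\epsilon>0$ small enough keeps $Gm>0$ while making $m$ strictly positive, completing the reduction.

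The only nontrivial obstacle is the Hahn--Banach step ensuring that some strictly positive real vector is mapped by $G$ into the positive orthant; without the positive-definiteness input this can fail, as easy examples with indefinite symmetric matrices show. The remaining steps---openness of $\{Gm>0\}$, density of rationals, clearing denominators to land in $\Z_{\ge 1}^r$, and the small perturbation to strictify $m\ge 0$ to $m>0$---are routine. An alternative attack would use a Cholesky factorization $G=LL^\top$ and construct $m$ recursively in triangular coordinates, but the separation argument feels more direct and isolates the role of positive definiteness in a single inequality.
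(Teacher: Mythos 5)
Your proof is correct and is essentially the paper's own argument in different coordinates: the paper separates the cone $K=\cone(x_1,\dots,x_r)$ from its dual $K^*$ and derives $\bil zz\le0$ for a separating vector $z$, which is exactly your $c^\top Gc\le0$ after expressing everything in the basis $x_1,\dots,x_r$ via the Gram matrix. The concluding step (a nonempty open cone contains suitable rational, hence integer, points) is also the same.
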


\begin{proof}
Let $K$ be the convex cone spanned by $x_1,\dots,x_r$, and let $K^*
=\{y\in\R^r$: $\bil xy\ge0\}$ be the dual cone. Since $x_1,\dots,x_r$
are a basis, both $K$ and $K^\ast$ have non-empty interior. We have
to show that
the interiors intersect. Assuming $\inter(K)\cap
\inter(K^*)=\emptyset$, there exists $0\ne z\in\R^r$ with $\bil xz
\ge0$ for all $x\in K$ and $\bil yz\le0$ for all $y\in K^*$. Hence
$z\in K^*\cap(-K^{**})=(K^*)\cap(-K)$, which implies $\bil zz\le0$,
whence $z=0$, a contradiction. Now $\interior(K)\cap\interior(K^\ast)$
is a non-empty open cone, hence it contains integer points with
respect to the basis $x_1,\dots,x_r$.
\end{proof}

\begin{proof}[Proof of Proposition \ref{negintmat}]
Let $C_1,\dots,C_r$ be the irreducible components of $D$, let $U:=
X\setminus D$, and let $E$ be a divisor on $X$ such that $L\cong
\scrO_X(E)$. Every section in $\Gamma(U,L)$ is a meromorphic section
of $L$ on $X$, which means that
$$\Gamma(U,L)\>=\>\bigcup_{n\ge1}\Gamma(X,E+nD)$$
(ascending union). Since $A_D$ is negative definite, we find integers
$m_1,\dots,m_r\ge1$ such that $D:=\sum_{i=1}^rm_iC_i$ satisfies
$C_i\mal D<0$ ($i=1,\dots,r$), using Lemma \ref{posvector}. By Lemma
\ref{rigid}, there exists $n_0\ge1$ such that $|E+nD|=(n-n_0)D+
|E+n_0D|$ for all $n\ge n_0$, which means $\Gamma(X,E+nD)=\Gamma
(X,E+n_0D)$. Hence $\Gamma(U,L)=\Gamma(X,E+n_0D)$, and so this space
has finite dimension.
\end{proof}

\begin{rem}
The hypothesis that $A_D$ is negative definite in Proposition
\ref{negintmat} entails $\scrO(X\setminus D)=k$.
One may wonder whether \ref{negintmat} remains true if only $\scrO
(X\setminus D)=k$ is assumed. An example due to Mondal and Netzer
\cite{mn} shows that this usually fails. We will revisit their
construction in Example \ref{Example:MondalNetzer} below. On the other
hand, we will see in \ref{zsfsg} below
that such problems do not occur in a toric setting.
\end{rem}

\begin{lab}\label{intform}%
Let $\Sigma$ be the fan of a non-singular projective toric surface
$X$. For $\rho\in\Sigma(1)$ let $Y_\rho=\ol{O_\rho}$. Let
$\rho_0,\dots,\rho_{m-1},\rho_m=\rho_0$ be the elements of
$\Sigma(1)$, written in cyclic order, so that $\rho_{i-1}$ and
$\rho_i$ bound a cone from $\Sigma(2)$ for $i=1,\dots,m$. Let $v_i$
be the primitive generator of $\rho_i$. The divisor class group of
$X$ is generated by the $Y_i=Y_{\rho_i}$, and the intersection form
on $X$ has the following description (see \cite[\S 10.4]{cls}):
Given $1\le i<m$, there is an integer $b_i$ such that $b_iv_i=
v_{i-1}+v_{i+1}$. Then we have $Y_i^2=-b_i$, $Y_i\mal Y_j=1$ if
$j-i=\pm1$, and $Y_i\mal Y_j=0$ otherwise. Similarly for $i=0$.
\end{lab}

\begin{lem}\label{schnittform}%
Let $n\ge1$, let $\rho_0,\dots,\rho_{n+1}$ be a sequence of pairwise
different cones in $\Sigma(1)$ such that $\rho_{i-1}$ and $\rho_i$
bound a cone from $\Sigma(2)$ for $i=1,\dots,n+1$. Let $l=\rho_0\cup
(-\rho_0)$, and let $A$ be the intersection matrix of the divisor
$\sum_{i=1}^nY_{\rho_i}$ on $X$. Then:
\begin{itemize}
\item[(a)]
$\det(A)=0$ \ $\iff$ \ $\rho_{n+1}=-\rho_0$;
\item[(b)]
$A\prec0$ \ $\iff$ \ $(-1)^n\,\det(A)>0$ \ $\iff$ \ $\rho_1$ and
$\rho_{n+1}$ lie (strictly) on the same side of the line~$l$;
\item[(c)]
$A$ is indefinite \ $\iff$ \ $(-1)^n\,\det(A)<0$ \ $\iff$ \ $\rho_1$
and $\rho_{n+1}$ lie (strictly) on opposite sides of~$l$.
\end{itemize}
In case (a) we have $A\preceq0$ and $\rk(A)=n-1$. In case (c) the
matrix $A$ has a unique positive eigenvalue.
\end{lem}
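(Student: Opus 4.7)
The plan is to reduce everything to the arithmetic of the primitive ray generators. By \ref{intform}, $A$ is the tridiagonal matrix with diagonal entries $-b_1,\dots,-b_n$ and ones on the super- and sub-diagonals. Since $X$ is non-singular, the cone $\cone(v_0,v_1)\in\Sigma(2)$ is generated by a $\Z$-basis of $N$, so I may write $v_k=\alpha_k v_0+\beta_k v_1$ with $\alpha_k,\beta_k\in\Z$. The toric relation $v_{k+1}=b_k v_k-v_{k-1}$ gives $\beta_{k+1}=b_k\beta_k-\beta_{k-1}$ with $\beta_0=0,\ \beta_1=1$, while cofactor expansion of the $k\times k$ leading principal minor $D_k$ along its last row gives $D_k=-b_k D_{k-1}-D_{k-2}$ with $D_0=1,\ D_1=-b_1$. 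A short induction then yields the key identity
\[
D_k\>=\>(-1)^k\beta_{k+1}.
\]

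Next I would record the geometric interpretation of the sign of $\beta_k$: in the $(v_0,v_1)$-coordinates the line $l$ is $\{\beta=0\}$, so $\beta_k>0$, $\beta_k<0$, or $\beta_k=0$ exactly when $\rho_k$ lies strictly above $l$, strictly below $l$, or equals $\pm\rho_0$ respectively. Because $\rho_0,\rho_1,\dots$ appear in counterclockwise cyclic order with $\arg(v_k)$ strictly increasing from $0$, $\beta_k$ stays positive while $\rho_k$ is in the upper half-plane, becomes zero only at the (at most one) index where $\rho_k=-\rho_0$, and is negative thereafter.

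With these two ingredients, (a) and the determinant parts of (b) and (c) are immediate: $\det(A)=D_n=(-1)^n\beta_{n+1}$ vanishes iff $v_{n+1}=-v_0$ iff $\rho_{n+1}=-\rho_0$, and otherwise $(-1)^n\det(A)=\beta_{n+1}$ records the side of $l$ on which $\rho_{n+1}$ lies. For the definiteness claim in (b), I would apply Sylvester's criterion: $A\prec 0$ iff $(-1)^k D_k>0$ for all $k=1,\dots,n$, i.e., iff $\beta_2,\dots,\beta_{n+1}$ are all positive, which by the monotonicity above reduces to the single condition $\beta_{n+1}>0$.

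To finish (c) and the supplementary claim in (a), I would induct on $n$ and apply Cauchy's interlacing theorem between $A$ and its principal $(n{-}1)\times(n{-}1)$ submatrix $A'$, which is the intersection matrix of $\sum_{i=1}^{n-1}Y_{\rho_i}$ and to which the lemma applies at size $n-1$ (with $\rho_n$ playing the role of $\rho_{n+1}$). Depending on where $\rho_n$ lies relative to $l$, the inductive hypothesis says $A'$ is negative definite, negative semidefinite of rank $n-2$, or indefinite with a unique positive eigenvalue; in every case interlacing forces $A$ to have at most one non-negative eigenvalue, and the already-computed sign of $\det(A)$ pins down whether this eigenvalue is zero (case (a), giving $A\preceq 0$ with $\rk(A)=n-1$) or strictly positive (case (c), giving exactly one positive eigenvalue). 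The step requiring the most care will be the bookkeeping when $-\rho_0$ occurs strictly between $\rho_0$ and $\rho_{n+1}$ in the cyclic order, so that some interior $\beta_j$ vanishes; the monotonicity statement in the second paragraph is precisely what makes this case transparent.
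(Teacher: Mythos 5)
Your argument follows essentially the same route as the paper's. The key identity $D_k=(-1)^k\beta_{k+1}$ is precisely the paper's formula $v_{i+1}=(-1)^i\delta(b_1,\dots,b_i)\,v_1+(-1)^i\delta(b_2,\dots,b_i)\,v_0$ read off in the $v_1$-coordinate; part (a), the determinant statements, and the Sylvester-criterion proof of (b) via the monotonicity of the signs of $\beta_2,\dots,\beta_{n+1}$ along the cyclic order are exactly what the paper intends (it dispatches all of this with ``the lemma follows easily from these identities''). Your interlacing induction supplies the eigenvalue counts the paper leaves implicit, and it works, but one sentence is overstated: interlacing with $A'$ forces $A$ to have at most one non-negative eigenvalue only when $A'\prec 0$. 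When $A'$ is singular negative semidefinite, or indefinite with a unique positive eigenvalue, interlacing only gives $\lambda_3(A)\le\lambda_2(A')<0$, i.e.\ at most \emph{two} non-negative eigenvalues of $A$. In both of those situations, however, one checks (as your monotonicity statement shows) that $\beta_{n+1}<0$, so $\det(A)\ne0$ and $(-1)^n\det(A)<0$; hence the number of positive eigenvalues is odd and at most two, so equal to one. That is exactly the determinant-sign argument you invoke at the end, so the proof is complete once the interlacing claim is stated with the correct bound.
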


The hypothesis indicates that $\rho_0,\dots,\rho_{n+1}$ are given in
cyclic order and that there exists no further cone from $\Sigma(1)$
in between them. Since $\rho_{n+1}\ne\rho_0$, there exists at least
one cone in $\Sigma(2)$ that is not of the form $\cone(\rho_{i-1},\,
\rho_i)$ with $1\le i\le n+1$.

\begin{proof}
Let $v_i\in N$ be the primitive vector generating $\rho_i$, for
$i=0,\dots,n+1$. Let $b_1,\dots,b_n\in\Z$ be defined by $v_{i+1}+
v_{i-1}=b_iv_i$ ($i=1,\dots,n$). Then
$$A\ =\ \begin{pmatrix}-b_1&1\\1&-b_2&1\\&\ddots&\ddots&\ddots\\
&&1&-b_{n-1}&1\\&&&1&-b_n\end{pmatrix}$$
Let $\delta(b_1,\dots,b_i)$ be the upper left $i\times i$ principal
minor of $A$ ($i=1,\dots,n$). Then
$$v_{i+1}\>=\>(-1)^i\,\delta(b_1,\dots,b_i)\,v_1+(-1)^i\,
\delta(b_2,\dots,b_i)\,v_0$$
holds for $i=1,\dots,n$. In particular,
$$v_{n+1}\>=\>(-1)^n\det(A)\,v_1+(-1)^n\,\delta(b_2,\dots,b_n)\,
v_0.$$
Since $v_0,\,v_1$ are linearly independent and $v_0\ne v_{n+1}$, the
lemma follows easily from these identities.
\end{proof}

\begin{lab}\label{vorbereit}%
We keep the previous hypotheses. Let $T$ be a subset of $\Sigma(1)$,
let $T'=\Sigma(1)\setminus T$, and let
$$U\>=\>X\setminus\bigcup_{\tau\in T}Y_\tau,$$
an open toric subvariety of $X$. Let $C=\cone(\tau'\colon\tau'\in
T')\subset N_\R$, then $\scrO(U)=k[M\cap C^*]$, the semigroup
algebra of $M\cap C^*$.
The following list exhausts all possible cases:
\begin{itemize}
\item[1.]
$C=N_\R$. Then $|T'|\ge3$ and $\scrO(U)=k$.
\item[2.]
$C$ is a half-plane. Then $|T'|\ge3$ and $\scrO(U)\cong k[u]$
(polynomial ring in one variable).
\item[3.]
$C$ is a line. Then $|T'|=2$ and $\scrO(U)\cong k[u,u^{-1}]$ (ring
of Laurent polynomials in one variable).
\item[4.]
$C\setminus\{0\}$ is contained in an open half-plane. Then we have
$\trdeg\scrO(U)=2$.
\end{itemize}
\end{lab}

The following result shows that, for toric divisors on non-singular
toric surfaces, the Iitaka dimension is characterized by the
signature of the intersection matrix.

\begin{prop}\label{zsfsg}%
Let $X$ be a non-singular toric projective surface with fan $\Sigma$.
Let $T\subset\Sigma(1)$ with $T\ne\emptyset$, let $U=X\setminus
\bigcup_{\tau\in T}Y_\tau$, and let $A$ be the intersection matrix
of~$T$.
\begin{itemize}
\item[(a)]
$A\prec0$ \ $\iff$ \ $\scrO(U)=k$.
\item[(b)]
$A\preceq0$ and $\det(A)=0$ \ $\iff$ \ $\trdeg\scrO(U)=1$.
\item[(c)]
$A$ has a positive eigenvalue \ $\iff$ \ $\trdeg\scrO(U)=2$.
\end{itemize}
Moreover, in case (c) we have $\det(A)=0$ if and only if $|\Sigma(1)
\setminus T|\le1$.
\end{prop}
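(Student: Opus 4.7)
The key observation is that $A$ is block-diagonal: two distinct rays $\sigma,\sigma'\in T$ satisfy $Y_\sigma\cdot Y_{\sigma'}\ne 0$ only when they span a common cone of $\Sigma(2)$, so $A$ is the direct sum of tridiagonal blocks, one for each maximal run of consecutive elements of $T$ between two consecutive elements of $T'$ in the cyclic order of $\Sigma(1)$. When $|T'|\ge 2$, each such block is precisely the matrix analysed in Lemma \ref{schnittform}, with the two flanking $T'$-rays playing the role of $\rho_0$ and $\rho_{n+1}$. Combined with the classification in \ref{vorbereit} of $\scrO(U)$ by the cone $C=\cone(T')$, this lets me match signatures with transcendence degrees.

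The three equivalences then follow mostly from results already in the paper. For (c), the implication ``$\trdeg\scrO(U)=2\Rightarrow A$ has a positive eigenvalue'' is the contrapositive of Corollary \ref{posev2tr2}, while the converse is essentially the argument used in its proof: a positive eigenvalue of $A$ yields non-negative integer coefficients $m_i$ with $D'^2>0$ for $D'=\sum_i m_i C_i$, and then \ref{signintmat}(b) together with the inclusion $\scrO(X\setminus D)\supset\scrO(X\setminus D')$ forces $\trdeg\scrO(U)=2$. For (a), Proposition \ref{signintmat}(a) gives one direction; conversely, $\scrO(U)=k$ places us in Case 1 of \ref{vorbereit}, i.e.\ $C=N_\R$, which geometrically forbids two consecutive rays of $T'$ from being antipodal (else $T'$ would lie in a closed half-plane), so by Lemma \ref{schnittform}(b) every block is negative definite. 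Part (b) is then formal: ``$A\preceq 0$ and $\det A=0$'' is the case intermediate between $A\prec 0$ and ``$A$ has a positive eigenvalue'', which by (a) and (c) corresponds to $\trdeg\scrO(U)=1$.

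For the moreover statement, the $(\Leftarrow)$ direction uses the toric relations $\sum_{\sigma\in\Sigma(1)}\bil m{v_\sigma}Y_\sigma\sim 0$ for $m\in M$: if $|T'|=0$, applying these to a basis of $M$ produces two independent null vectors of $A$; if $T'=\{\mu\}$, I take a nonzero $m\in M$ annihilating $v_\mu$, so that $(\bil m{v_\sigma})_{\sigma\in T}$ is a nonzero null vector of $A$ (nonzero since removing a single ray from a complete $2$-dimensional fan still leaves a spanning set of $N_\R$). For $(\Rightarrow)$, in case (c) with $|T'|\ge 2$ we are in Case 4 of \ref{vorbereit}, and I verify that each block of $A$ has nonzero determinant: the $|T'|-1$ ``small'' arcs, whose two flanking $T'$-rays both lie in the open half-plane $H\supset T'$ at angular distance less than $\pi$, satisfy Lemma \ref{schnittform}(b) and so contribute negative-definite blocks with nonzero determinant, while the unique ``big'' arc, flanked by the two extreme rays of $\cone(T')$, wraps through $-H$ with angular extent exceeding $\pi$, placing its first $T$-ray and its opposite flanking $T'$-ray on opposite sides of the line through the first flanking ray, so that Lemma \ref{schnittform}(c) yields nonzero determinant. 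The main technical point is this ``opposite sides'' verification, which rests on $\cone(T')$ having angular extent strictly less than $\pi$.
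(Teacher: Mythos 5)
Your strategy coincides with the paper's: decompose $A$ into the tridiagonal blocks of Lemma \ref{schnittform} indexed by the arcs of $T$-rays between consecutive $T'$-rays, translate via \ref{vorbereit} between the position of $C=\cone(\tau'\colon\tau'\in T')$ and $\trdeg\scrO(U)$, deduce (b) formally from (a) and (c), and handle the ``moreover'' statement by the arc analysis together with the relations $\sum_\rho\bil{m}{v_\rho}Y_\rho\sim0$. Most of this is correct, but there is one substantive gap. In (c) you attribute the implication ``$\trdeg\scrO(U)=2\Rightarrow A$ has a positive eigenvalue'' to the contrapositive of Corollary \ref{posev2tr2}; that is a logical slip, since the contrapositive of \ref{posev2tr2} is the \emph{opposite} implication (positive eigenvalue $\Rightarrow\trdeg=2$), which you then prove a second time via \ref{signintmat}(b). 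The direction $\trdeg=2\Rightarrow$ positive eigenvalue is exactly the one that requires the toric structure (it rules out $A\preceq0$ singular with Iitaka dimension $2$, which is not excluded for general surfaces). Your big-arc argument in the ``moreover'' paragraph does in fact establish it when $|T'|\ge2$, because an indefinite block forces a positive eigenvalue of $A$; but when $|T'|\le1$ --- where $\trdeg\scrO(U)=2$ always holds by \ref{vorbereit} --- your null-vector construction only yields $\det A=0$, which is compatible with $A\preceq0$, so (c) is not finished there, and neither is the forward direction of (b), which needs precisely ``no positive eigenvalue $\Rightarrow\trdeg\le1$''. The paper closes this case with the Hodge index theorem: for $|T'|\le1$ the classes $[Y_\tau]$, $\tau\in T$, span $\Pic(X)_\R$, so $A$ inherits the signature of the intersection form and has exactly one positive eigenvalue.

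A second, smaller point concerns the reverse direction of (a). From $C=N_\R$ you exclude antipodal consecutive $T'$-rays, but that only rules out case (a) of Lemma \ref{schnittform} ($\det A_i=0$); to invoke \ref{schnittform}(b) and conclude $A_i\prec0$ you must also exclude the indefinite case (c) for each block. This is true, but needs its own sentence: if some block were indefinite, the arc of $T$-rays it spans would have angular extent greater than $\pi$, forcing all of $T'$ into a salient cone (the complementary closed arc), contradicting $C=N_\R$. With these two repairs --- the Hodge-index argument for $|T'|\le1$ and the half-plane argument excluding indefinite blocks in (a) --- your proof matches the paper's.
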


\begin{proof}
Write $T'=\Sigma(1)\setminus T$ as before.
The implications ``$\To$'' in (a) resp.\ in (c) hold for general
reasons (and could be easily reproved using \ref{vorbereit}), see
Proposition \ref{negintmat} and Corollary \ref{posev2tr2}.
The group $\Pic(X)$ is free abelian of rank $|\Sigma(1)|-2$. The
intersection form on $\Pic(X)$ is non-degenerate, and all of its
eigenvalues are negative except for one, by the Hodge index theorem.
Therefore it is clear for $|T'|\le1$ that $A$ is singular and has a
unique positive eigenvalue.
Also, $\trdeg\scrO(U)=2$ is clear for $|T'|\le1$, see
\ref{vorbereit}.

So we assume $|T'|\ge2$ for the rest of the proof.
The matrix $A$ is a block diagonal sum of matrices $A_1,\dots,A_r$.
For each $i=1,\dots,r$ there is a sequence $\rho_0,\dots,\rho_{n+1}$
as in Lemma \ref{schnittform} with $\rho_0,\,\rho_{n+1}\in T'$ and
$\rho_1,\dots,\rho_n\in T$, such that $A_i$ is the intersection
matrix of $\rho_1,\dots,\rho_n$. By \ref{vorbereit} we have
$\trdeg\scrO(U)=2$ if and only if all $\tau'\in T'$ are contained in
a common open half-plane (together with~$0$). By \ref{schnittform} it
is equivalent that $A_i$ is indefinite for one
index $i\in\{1,\dots,r\}$. Since $|T'|\ge2$, it is equivalent that
$A$ is indefinite and $\det(A)\ne0$. On the other hand, $\scrO(U)=k$
is by \ref{vorbereit} equivalent to the condition that $T'$ is not
contained in a half-plane. By \ref{schnittform}, this in turn is
equivalent to $A_i\prec0$ for every $i=1,\dots,r$, and hence to
$A\prec0$. This proves (a) and (c) together with the last statement,
and so it also implies (b).
\end{proof}

\begin{example}
Recalling the setup in \ref{abstractsetup}, we let $k=\R$, fix a
pointed cone $\sigma\in\Sigma$ and regard $X_\Sigma$ as a completion
of the affine toric variety $U_\sigma$. If $S\subset U_\sigma$ is an
open semi-algebraic set and the toric compatibility condition $(TC)$
is satisfied, the subset $T\subset\Sigma(1)$ in \ref{vorbereit}
consists of those $\tau\in\Sigma(1)$ with $\tau\nsubseteq\sigma$ and
$K_0(S)\cap\relint(\tau)=\emptyset$.

All of the four different cases in \ref{vorbereit} can occur in this
situation. Cases (1) and (4) may arise in a trivial way: For
example, let $\Sigma$ be the standard fan of $\P^2$ and
$\sigma=\{\cone(e_1,e_2)\}$, so that $U_\sigma\into X_\Sigma$ is the
usual embedding of the affine into the projective plane. Then if
$S=(\R^*)^2$, we find $T=\emptyset$ and $T'=\Sigma(1)$, hence 
$|T'|=3$  and $U=\P^2$, so that $\scrO(U)=B_{U_\sigma}(S)=\R$. The 
same will hold whenever $S\subset(\R^*)^2$ contains a non-empty open 
cone. On the other hand, if $S\subset(\R^*)^2$ is bounded, we have
$T=\{\cone(-e_1-e_2)\}$ and $T'=\{\cone(e_1),\cone(e_2)\}$, so that
$U=U_\sigma$ and $\scrO(U)=B_{U_\sigma}=\R[x_1,x_2]$ (for a more
interesting example leading to case (4), see also 
\cite[Example 3.10]{ps}).

In Example \ref{examples:CompatibleCompletions}(2), the fan
$\Sigma$ is the refinement of the standard fan of $\P^2$ in which
$\Sigma(1)=\{\cone(e_1),\cone(e_2),\cone(-e_1-e_2),
\pm\cone(e_1-ke_2)\}$. 
Here, $T=\{\cone(-e_1-e_2)\}$ and $C=\cone(\tau'\colon\tau'\in
T')=\cone(\pm(e_1-ke_2))$ is a half-plane, so we are in case
(2). Indeed we found $\scrO(U)=B_{U_\sigma}(S)=\R[x_1^kx_2]$.

Case (3) obviously will not come up if we start from
$\sigma=\cone(e_1,e_2)$, since $\R[x_1,x_2]$ cannot contain a ring of
Laurent polynomials. But if we consider for example
$S=\{(\xi_1,\xi_2)\in(\R^\ast)^2\colon 1<\xi_1<2\}$ and
$\Sigma$ as above with $k=0$, we have the same completion as in
Example \ref{examples:CompatibleCompletions}(1) but with
$\sigma=\{0\}$. Here, we find $T'=\{\pm\cone(e_2)\}$ and 
$\scrO(U)=B_{U_\sigma}(S)=\R[x_1,x_1^{-1}]$.
\end{example}


\section{Filtration by degree of boundedness}\label{sec:Filtrations}

\begin{lab}\label{mnfilt}%
Let $S\subset\R^d$ be a semi-algebraic set. In \cite{mn}, Mondal and
Netzer studied the following filtration on the polynomial ring. For
$n\ge0$ let
$$B_n(S)\>=\>\bigl\{f\in\R[x]\colon \exists\,g\in\R[x]\text{ with }
\deg(g)\le2n\text{ and }f^2\le g\text{ on }S\bigr\}.$$
Then the $B_n(S)$ form an ascending filtration on
$\R[x]=\R[x_1,\dots,x_d]$ by linear subspaces, satisfying $B_m(S)\,
B_n(S)\subset B_{m+n}(S)$ for alle $m,n\ge0$. Clearly $B_0(S)=B(S)$,
the ring of polynomials bounded on~$S$.
\end{lab}

\begin{lab}\label{lab:GeneralFiltration}
We propose to generalize the construction from \ref{mnfilt}.
Let $V$ be a normal affine $\R$-variety, let $S\subset
V(\R)$ be a semi-algebraic set, and let $V\subset X$ be an open dense
embedding into a normal and complete variety~$X$. We assume that the
completion is compatible with $S$, in the sense of
\ref{DefCompatibleCompletion}. Let $Y$ (resp.~$Y'$) be the union of
those irreducible components $Z$ of $X\setminus V$ for which
$\ol S\cap Z(\R)$ is empty (resp., non-empty), and put
$U=X\setminus Y$. Then $V=X\setminus(Y\cup Y')$. For $n\ge0$ let
$$L_{X,n}(S)\>=\>\Gamma(U,\scrO_X(nY')).$$
Since $Y'$ is disjoint from $V\subset U$, we may consider
$L_{X,n}(S)$ as a subspace of $\scrO_X(V)=\R[V]$, namely
$$L_{X,n}(S) = \{f\in\R[V]\colon\ord_Z(f)\ge -n\text{ for all
components }Z\text{ of }Y'\}.$$
The $L_{X,n}(S)$ ($n\ge0$) define an ascending and exhaustive
filtration of $\R[V]$ by linear subspaces, satisfying
$$L_{X,m}(S)\,L_{X,n}(S)\>\subset\>L_{X,m+n}(S)$$
for $m,n\ge0$. Moreover $L_{X,0}(S)=B_V(S)$ by Theorem
\ref{CompatibleCompletionIsoBounded}. In particular, the $L_{X,n}(S)$
are modules over the ring $B_V(S)$.

For $V=\A^d$ the affine space, the two filtrations $\{B_n(S)\}$ and
$\{L_{X,n}(S)\}$ on $\R[\x]$ are compatible in the following sense:
\end{lab}

\begin{prop}\label{Prop:CompareCompletions}
With notation as above, fix $m\ge 0$.
\begin{itemize}
\item[(a)]
There exists $n\ge 0$ such that $B_m(S)\subset L_{X,n}(S)$.
\item[(b)]
There exists $n\ge 0$ such that $L_{X,m}(S)\subset B_n(S)$.
\end{itemize}
\end{prop}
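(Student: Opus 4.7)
My plan is to compare the two filtrations using valuation-theoretic arguments at the prime divisors $Z \subset Y'$, exploiting the Zariski density of $\ol S \cap Z(\R)$ in $Z$ afforded by $S$-compatibility. For each such $Z$, write $v_Z := \ord_Z$ for the associated valuation on $\R(V) = \R(x_1, \dots, x_d)$, and note the routine monomial bound $v_Z(g) \ge -c_Z \deg(g)$ for every $g \in \R[\x]$, where $c_Z := \max_i \max(0, -v_Z(x_i))$ depends only on $Z$.

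For (a), let $f \in B_m(S)$ with $f^2 \le g$ on $S$ and $\deg(g) \le 2m$. I claim that $v_Z(f^2) \ge v_Z(g)$ for every $Z \subset Y'$; granting this, $v_Z(f) \ge v_Z(g)/2 \ge -m c_Z$, so $f \in L_{X,n}(S)$ with $n := m \cdot \max_{Z \subset Y'} c_Z$. To prove the claim, suppose for contradiction that $v_Z(g) > v_Z(f^2) = -2k$ for some $k \ge 0$. By Zariski density of $\ol S \cap Z(\R)$ in $Z$ (and the fact that singularities of $X$ have codimension $\ge 2$), I pick a generic smooth point $P \in \ol S \cap Z(\R)$ at which a local defining equation $t$ for $Z$ is a real parameter and $f^2 \cdot t^{2k}$ is regular at $P$ with nonzero value $v(P)$. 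Since $f^2 \ge 0$ and $t^{2k} \ge 0$, we have $v(P) > 0$. Semi-algebraic curve selection yields an arc $\gamma \colon [0,\epsilon) \to X(\R)$ with $\gamma(0) = P$ and $\gamma((0,\epsilon)) \subset S$; along $\gamma$ the leading term of $g - f^2$ is $-v(P) t^{-2k}$, which is strictly negative as $s \to 0^+$, contradicting $g - f^2 \ge 0$ on $S$.

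For (b), set $h := 1 + x_1^2 + \cdots + x_d^2 \in \R[\x]$, so $h \ge 1$ on $V(\R)$. For each $Z \subset Y'$ write $v_Z(h) = -a_Z$ with $a_Z \ge 0$. Choose $N$ so that $N a_Z \ge 2m$ whenever $a_Z > 0$; for the remaining $Z \subset Y'$ one has $a_Z = 0$, which forces $v_Z(x_i) \ge 0$ for all $i$, and hence $v_Z$ is non-negative on every polynomial. Either way, $v_Z(f^2/h^N) = 2 v_Z(f) + N a_Z \ge 0$ for all $Z \subset Y'$ and all $f \in L_{X,m}(S)$. It will then suffice to show that $r := f^2/h^N$ is bounded on $S$: granting this, $f^2 \le C h^N$ on $S$ for some $C$, and $C h^N \in \R[\x]$ has degree $2N$, so $f \in B_N(S)$ with $N$ depending only on $m$.

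The main obstacle is this boundedness of $r$ on $S$. Arguing by contradiction via semi-algebraic curve selection, I get an arc $\gamma\colon [0,\epsilon) \to X(\R)$ with $\gamma((0,\epsilon)) \subset S$ and $r(\gamma(s)) \to \infty$. Since $r$ is continuous on $V(\R)$ (as $h \ge 1$ there), the limit $P := \gamma(0)$ lies in $\ol S \cap (X \setminus V)(\R)$, and since $\ol S \cap Y(\R) = \emptyset$ we have $P \in Y'(\R)$. I would then show that $r$ is regular at $P$, viewed as a rational function on the complex variety $X$; by normality of $X$, this reduces to checking that no prime divisor through $P$ lies in the polar divisor of $r$. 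The candidates are supported either on $X \setminus V = Y \cup Y'$ or on the zero locus $H'$ of $h$ in $X$. Divisors in $Y'$ contribute order $\ge 0$ to $r$ by the choice of $N$; divisors in $Y$ cannot pass through $P \notin Y$; and no prime component of $H'$ contains a real point, since $h \ge 1 > 0$ on $V(\R)$ forces $h$ to be non-vanishing on all of $X(\R)$ (at regular points by continuity, and at poles by definition). Hence $r$ is regular at $P$ and therefore bounded in a Euclidean neighborhood of $P$, contradicting $r(\gamma(s)) \to \infty$.
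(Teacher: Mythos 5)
Part (a) of your proposal is correct and is essentially the paper's argument with the black box opened: where the paper invokes Theorem \ref{CompatibleCompletionIsoBounded} to get $\ord_Z(f^2/(g+1))\ge 0$ along the components $Z$ of $Y'$, you reprove the underlying fact (for functions non-negative on $S$, no cancellation of pole orders can occur along a divisor $Z$ with $\ol S\cap Z(\R)$ Zariski dense; this is \cite[Lemma 3.4]{ps}) by the density-plus-sign argument. The choice $n=m\cdot\max_Z c_Z$ matches the paper's choice of $n$ with $\R[\x]_{2m}\subset L_{X,2n}(S)$.

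Part (b) has a genuine gap, and the intermediate claim it rests on is false. You need $r=f^2/h^N$ to be bounded on $S$, and you argue this by showing $r$ is regular at every $P\in\ol S\cap Y'(\R)$, which in turn requires that no component of the zero divisor of $h$ in $X$ pass through such a $P$. Your justification --- ``$h$ is non-vanishing on all of $X(\R)$, at regular points by continuity and at poles by definition'' --- does not cover indeterminacy points: a real point of $X$ at infinity can lie on the zero divisor and the polar divisor of $h$ simultaneously, and nothing prevents it from lying on $\ol S$. Concretely, let $V=\A^2$, $S=\R^2$, and let $X$ be obtained from $\P^2$ by blowing up the two real points $(0:1:\pm1)$ of the line at infinity and contracting the strict transform of that line (a $(-1)$-curve) to a smooth point $P$. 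This is an $S$-compatible completion with $Y=\emptyset$, $Y'=E_0\cup E_1$, and the closure of $\{1+x^2+y^2=0\}$ in $X$ contains $P\in\ol S$, since the conic meets the line at infinity in $(0:1:\pm i)$, which are contracted to $P$. One checks $\ord_{E_i}(x)=\ord_{E_i}(y)=-1$ and $\ord_{E_i}(x^2-y^2+x)=-1$, so $f=x^2-y^2+x\in L_{X,1}(S)$, while $a_{E_i}=2$ gives $N=1$; yet $f^2/h=(x^2-y^2+x)^2/(1+x^2+y^2)$ tends to $\infty$ along $y=0$, $x\to\infty$ (an arc converging to $P$). So $r$ is unbounded on $S$, and your argument breaks exactly at $P$. (The statement itself survives here: in this $X$ one can show $L_{X,1}(S)\subset\R[x,y]_{\le2}\subset B_2(S)$.)

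You should not feel you missed an obvious step: the paper's own proof of (b) is terse at precisely this point --- it asserts that $f^2/(g^2+1)$ is ``defined and bounded on $S$'' by Theorem \ref{CompatibleCompletionIsoBounded}, which implicitly requires the closure of $\{g^2+1=0\}$ to avoid $\ol S$ at infinity, and the example above (with $g=x$) defeats that choice of denominator as well. A complete argument for (b) must control the real points at infinity of the zero locus of the chosen denominator (or choose the denominator adapted to $f$, e.g.\ $g=f^2$ plus a correction of bounded degree); your proposal, like the paper's sketch, does not supply this.
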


\begin{proof}
(a)
Choose $n$ such that $\R[x]_{2m}\subset L_{X,2n}(S)$ (the existence
of such $n$ is clear since $\R[x]_{2m}$ is finite-dimensional). Now
given $f\in B_m(S)$, choose $g\in\R[x]_{2m}$ such that $f^2\le g$ on
$S$. The rational function $f^2/(g+1)$ is defined and bounded on $S$.
We apply Theorem \ref{CompatibleCompletionIsoBounded} to the
$S$-compatible completion $(V\cap{\rm dom}(g+1))\subset X$ and
conclude that $\ord_Z(f^2/(g+1))\ge0$ for all components $Z$ of $Y'$.
Then $f\in L_{X,n}(S)$.

(b)
Choose $g\in\R[x]$ with $\ord_Z(g)\le -m$ for all components $Z$
of $Y'$. Let $f\in L_{X,m}(S)$. By Theorem
\ref{CompatibleCompletionIsoBounded}, the rational function
$f^2/(g^2+1)$ is defined and bounded on $S$. Thus if $n=\deg(g)$,
then $L_{X,m}\subset B_n(S)$.
\end{proof}

\begin{example}\label{Example:MondalNetzer}
  The following example is due to Mondal and
  Netzer \cite{mn}. Let $V=\A^2$, $\R[V]=\R[x,y]$, put
  \begin{IEEEeqnarray*}{rlll}
    f_1 &= x^3y+x^6-x,\quad &S_1 &= \{(a,b)\in\R^2\colon 2\ge
    f_1(a,b)\ge 1,\ a\ge 1\},\\
    f_2 &= x^3y-x^6-x,\quad &S_2 &= \{(a,b)\in\R^2\colon 2\ge
    f_2(a,b)\ge 1,\ a\ge 1\},
  \end{IEEEeqnarray*}
and let $S=S_1\cup S_2$.
  Applying the procedure of \ref{constrgoodcompl} to $S_1$,
  starting with
  $V\subset\P^2$, requires a sequence of nine blow-ups. In the
  resulting completion $V\subset X_1$, the complement
  $C_\infty=X_1\setminus V$ has ten irreducible components
  $E_0,\dots,E_9$, which are the strict transforms of the line
  $\P^2\setminus V$ and the exceptional divisors of the nine
  blow-ups. Only $E_9$ meets $\ol{S}$, so we
  need to consider the divisor $Y_1=\sum_{i=0}^8 E_i$. The
  configuration of the irreducible components of $Y_1$ is shown in its
  intersection graph
  \begin{center}
    \begin{tikzpicture}
      \node (e0) at (1,2) {$E_0$}; \node (e2) at (1,1) {$E_2$}; \node
      (e3) at (1,0) {$E_3$}; \node (e1) at (1,-1) {$E_1$}; \node (e4)
      at (2,0) {$E_4$}; \node (e5) at (3,0) {$E_5$}; \node (e6) at
      (4,0) {$E_6$}; \node (e7) at (5,0) {$E_7$}; \node (e8) at (6,0)
      {$E_8$};

      \foreach \from/\to in
      {e0/e2,e1/e3,e2/e3,e3/e4,e4/e5,e5/e6,e6/e7,e7/e8} \draw (\from)
      -- (\to);
    \end{tikzpicture}
  \end{center}
and the full intersection matrix is
\[
M_{Y_1}=
{\tiny
\begin{bmatrix}
 -1 & 0 & 1 & 0 & 0 & 0 & 0 & 0 & 0 \\
 0 & -3 & 0 & 1 & 0 & 0 & 0 & 0 & 0 \\
 1 & 0 & -2 & 1 & 0 & 0 & 0 & 0 & 0 \\
 0 & 1 & 1 & -2 & 1 & 0 & 0 & 0 & 0 \\
 0 & 0 & 0 & 1 & -2 & 1 & 0 & 0 & 0 \\
 0 & 0 & 0 & 0 & 1 & -2 & 1 & 0 & 0 \\
 0 & 0 & 0 & 0 & 0 & 1 & -2 & 1 & 0 \\
 0 & 0 & 0 & 0 & 0 & 0 & 1 & -2 & 1 \\
 0 & 0 & 0 & 0 & 0 & 0 & 0 & 1 & -2
\end{bmatrix}.
}
\]
This matrix has a postive eigenvalue, which shows that $B_V(S_1)$ has
transcendence degree $2$. The compatible completion for $S_2$ is
isomorphic to that of $S_1$.

When resolving the union $S=S_1\cup S_2$, the first three blow-ups are
the same as for $S_1$. The blow-ups for $S_1$ and $S_2$ in the fourth
step are centered at two different points of $E_3$. In the resulting
$S$-compatible completion $X$ of $V$, the divisor $Y$ has intersection
graph
  \begin{center}
    \begin{tikzpicture}
      \node (e0) at (0,2) {$E_0$};
      \node (e2) at (0,1) {$E_2$};
      \node (e3) at (0,0) {$E_3$};
      \node (e1) at (0,-1) {$E_1$};
      \node (e4) at (1,0) {$E_4$};
      \node (e5) at (2,0) {$E_5$};
      \node (e6) at (3,0) {$E_6$};
      \node (e7) at (4,0) {$E_7$};
      \node (e8) at (5,0) {$E_8$};
      \node (f4) at (-1,0) {$E'_4$};
      \node (f5) at (-2,0) {$E'_5$};
      \node (f6) at (-3,0) {$E'_6$};
      \node (f7) at (-4,0) {$E'_7$};
      \node (f8) at (-5,0) {$E'_8$};

      \foreach \from/\to in
      {e0/e2,e1/e3,e2/e3,e3/e4,e4/e5,e5/e6,e6/e7,e7/e8,e3/f4,f4/f5,f5/f6,f6/f7,f7/f8} \draw (\from)
      -- (\to);
    \end{tikzpicture}
  \end{center}
and intersection matrix
\[
M_Y=
{\tiny
\left[
\begin{array}{cccccccccccccc}
 -1 & 0 & 1 & 0 & 0 & 0 & 0 & 0 & 0 & 0 & 0 & 0 & 0 & 0 \\
 0 & -3 & 0 & 1 & 0 & 0 & 0 & 0 & 0 & 0 & 0 & 0 & 0 & 0 \\
 1 & 0 & -2 & 1 & 0 & 0 & 0 & 0 & 0 & 0 & 0 & 0 & 0 & 0 \\
 0 & 1 & 1 & -3 & 1 & 0 & 0 & 0 & 0 & 1 & 0 & 0 & 0 & 0 \\
 0 & 0 & 0 & 1 & -2 & 1 & 0 & 0 & 0 & 0 & 0 & 0 & 0 & 0 \\
 0 & 0 & 0 & 0 & 1 & -2 & 1 & 0 & 0 & 0 & 0 & 0 & 0 & 0 \\
 0 & 0 & 0 & 0 & 0 & 1 & -2 & 1 & 0 & 0 & 0 & 0 & 0 & 0 \\
 0 & 0 & 0 & 0 & 0 & 0 & 1 & -2 & 1 & 0 & 0 & 0 & 0 & 0 \\
 0 & 0 & 0 & 0 & 0 & 0 & 0 & 1 & -2 & 0 & 0 & 0 & 0 & 0 \\
 0 & 0 & 0 & 1 & 0 & 0 & 0 & 0 & 0 & -2 & 1 & 0 & 0 & 0 \\
 0 & 0 & 0 & 0 & 0 & 0 & 0 & 0 & 0 & 1 & -2 & 1 & 0 & 0 \\
 0 & 0 & 0 & 0 & 0 & 0 & 0 & 0 & 0 & 0 & 1 & -2 & 1 & 0 \\
 0 & 0 & 0 & 0 & 0 & 0 & 0 & 0 & 0 & 0 & 0 & 1 & -2 & 1 \\
 0 & 0 & 0 & 0 & 0 & 0 & 0 & 0 & 0 & 0 & 0 & 0 & 1 & -2
\end{array}
\right]},
\]
which is negative semidefinite of corank $1$. Mondal and Netzer show
through direct computation that $B(S)=\R$, and at the same time that
$B_1(S)$ has infinite dimension over $\R$. We thus conclude from
Proposition \ref{Prop:CompareCompletions} that there exists $n\ge 0$
such that $\dim H^0(X\setminus Y,\,\scrO(n(E_9+E_9')))=\infty$, even
though $\scrO_X(X\setminus Y)=B(S)=\R$.
\end{example}

We now show that the phenomenon in the above example does not occur
for semi-algebraic sets inside a compatible toric completion. It
follows in particular that the set constructed by Mondal and Netzer
does not admit such a completion.

\begin{prop}\label{globsecttoric}%
Let $U$ be a toric variety. For any Weil divisor $D$ on $U$, the
space $H^0(U,\scrO_U(D))$ is finitely generated as a module over
$\scrO(U)=H^0(U,\scrO_U)$.
\end{prop}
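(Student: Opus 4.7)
My plan is to reduce the assertion to a classical combinatorial fact about lattice points in rational polyhedra.

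Write $U = X_\Sigma$ for a fan $\Sigma$ in $N_\R$. Since $U$ is normal, the standard exact sequence
\[
M \>\to\> \bigoplus_{\tau\in\Sigma(1)} \Z\cdot Y_\tau \>\to\> \mathrm{Cl}(U) \>\to\> 0
\]
shows that every Weil divisor class on $U$ has a $T$-invariant representative. Since linearly equivalent divisors yield isomorphic sheaves, we may replace $D$ and assume $D = \sum_\tau a_\tau Y_\tau$. Decomposing $H^0(U,\scrO_U(D))$ into $T$-isotypic components then gives
\[
H^0(U,\scrO_U(D)) \>=\> \bigoplus_{\alpha \in P_D \cap M} \R\cdot\x^\alpha,
\]
where $P_D := \{\alpha\in M_\R : \bil{\alpha}{v_\tau}\geq -a_\tau\text{ for all }\tau\in\Sigma(1)\}$ and $v_\tau$ is the primitive generator of $\tau$. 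This $P_D$ is a rational polyhedron in $M_\R$ whose recession cone is exactly $|\Sigma|^\ast$. Since $\scrO(U) = \R[M\cap|\Sigma|^\ast]$ acts on $H^0(U,\scrO_U(D))$ by character multiplication, which corresponds to lattice translation on exponents, the proposition reduces to the claim that the semigroup algebra $\R[P_D\cap M]$ is finitely generated as a module over $\R[|\Sigma|^\ast\cap M]$.

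To establish this last step, I would invoke the following classical fact: for any rational polyhedron $P\subset M_\R$ with recession cone $C$, the $\R$-vector space $\R[P\cap M] := \bigoplus_{\alpha\in P\cap M}\R\cdot\x^\alpha$ is finitely generated as a module over $\R[C\cap M]$. The argument writes $P = Q + C$ for a bounded rational polytope $Q$ (Minkowski--Weyl), fixes primitive generators $c_1,\dots,c_s\in M$ of the rays of $C$, and forms the bounded region $F := Q + \{\sum_i r_i c_i : 0\leq r_i < 1\}$. For any $p\in P\cap M$, write $p = q + \sum_i\lambda_i c_i$ with $q\in Q$, $\lambda_i\geq 0$, and set $c' := \sum_i\lfloor\lambda_i\rfloor c_i\in C\cap M$; then $p - c'\in F\cap M$. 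Since $F$ is bounded, $F\cap M$ is finite, giving the finite generating set $\{\x^f : f\in F\cap M\}$.

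The reduction to $T$-invariant divisors and the $T$-isotypic description of $H^0$ are both routine on normal toric varieties (cf.\ \cite{cls}), so the main obstacle is really just the combinatorial lemma. This is a mild obstacle, being essentially a form of the Hilbert basis principle for rational polyhedra, but some care is needed when $C$ is non-pointed or when $P_D$ contains an affine subspace. Once the lemma is in place, the proposition follows immediately.
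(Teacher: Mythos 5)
Your proposal is correct and follows essentially the same route as the paper: reduce to a torus-invariant divisor, identify $H^0(U,\scrO_U(D))$ with the span of characters indexed by the lattice points of the polyhedron $P_D$, and then show these lattice points form a finitely generated module over the semigroup $M\cap|\Sigma|^*$. The only difference is in the last combinatorial step, where the paper simply invokes Dickson's lemma while you give an explicit fundamental-domain argument (which, as you note, should use semigroup generators of $C\cap M$ rather than ray generators in the non-pointed case); both are adequate.
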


\begin{proof}
Let $\Sigma$ be the fan associated to $U$. For every $\rho\in
\Sigma(1)$ let $u_\rho\in N$ be the primitive generator of $\rho$. We
can assume that the Weil divisor $D$ is torus invariant (\cite{cls}
4.1.3).
So $D=\sum_{\rho\in\Sigma(1)}m_\rho Y_\rho$ with $m_\rho\in\Z$. By
\cite{cls} 4.1.2 and 4.3.2, the space $H^0(U,\scrO(D))$ is linearly
spanned by the characters $\x^\beta$ with $\beta$ in
$$B\>=\>\Bigl\{\beta\in M\colon\bil\beta{u_\rho}\ge-m_\rho
\text{ for all }\rho\in\Sigma(1)\Bigr\}.$$
On the other hand, the ring $\scrO(U)$ is linearly spanned by the
characters $\x^\alpha$ with $\alpha$ in
$$A\>=\>\Bigl\{\alpha\in M\colon\bil\beta{u_\rho}\ge0\text{ for all }
\rho\in\Sigma(1)\Bigr\}.$$
From Dickson's Lemma it follows that there exists a finite subset
$B_0$ of $B$ such that $B=B_0+A$.
Hence the $\scrO(U)$-module $\scrO(D)$ is generated by the characters
$\x^\beta$ with $\beta\in B_0$.
\end{proof}

\begin{cor}\label{corglobsecttoric}
Let $V$ be an affine toric $\R$-variety, let $S\subset V(\R)$ be a
semi-algebraic set. Assume that there exists a toric completion
$V\subset X$ of $V$ which is compatible with $S$. Then $L_{X,n}(S)$
is finitely generated as a module over $B_V(S)$, for every $n\ge0$.
\end{cor}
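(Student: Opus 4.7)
The plan is to reduce the statement directly to Proposition \ref{globsecttoric} by exploiting the fact that in a toric completion \emph{everything in sight is torus-invariant}. The only non-obvious structural point is to identify $U = X\setminus Y$ as a toric variety and to recognise $nY'|_U$ as a torus-invariant Weil divisor on it; once this is done, the result follows from Proposition \ref{globsecttoric} almost formally.

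Concretely, I would proceed as follows. Let $\Sigma$ be the fan of $X$ and let $V = U_\sigma$ correspond to a cone $\sigma\in\Sigma$. Since $V\subset X$ is toric, the irreducible components of $X\setminus V$ are precisely the torus-invariant prime divisors $Y_\tau$ for $\tau\in\Sigma(1)$ with $\tau\not\subset\sigma$. Partition these rays as $\Sigma(1)\setminus\sigma = R_Y \sqcup R_{Y'}$, where $\tau\in R_Y$ iff $\ol S\cap Y_\tau(\R)=\emptyset$. Then $Y = \bigcup_{\tau\in R_Y}Y_\tau$ and $Y' = \bigcup_{\tau\in R_{Y'}}Y_\tau$ are both torus-invariant, and removing the torus-invariant closed subset $Y$ from $X$ yields an open toric subvariety $U = X\setminus Y$, whose fan consists of those cones of $\Sigma$ none of whose rays lie in $R_Y$.

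Next, $Y'|_U = \sum_{\tau\in R_{Y'}}Y_\tau$ is a torus-invariant Weil divisor on the toric variety $U$, and the restriction of the reflexive sheaf $\scrO_X(nY')$ to the open subset $U$ coincides with $\scrO_U(nY'|_U)$, since both parametrise rational functions whose poles along the components of $Y'$ are bounded by $n$. Therefore
\[
L_{X,n}(S) \>=\> \Gamma(U,\scrO_X(nY'))\>=\>H^0(U,\scrO_U(nY'|_U)).
\]
Applying Proposition \ref{globsecttoric} to the toric variety $U$ and the torus-invariant Weil divisor $nY'|_U$, the right-hand side is finitely generated as a module over $\scrO(U)$. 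By Theorem \ref{CompatibleCompletionIsoBounded}, $\scrO(U) = B_V(S)$, and this gives the claim.

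The main (and essentially only) obstacle is the bookkeeping in the first step: one must check that removing the support of $Y$ really produces a toric open subvariety and that $Y'|_U$ is a bona fide torus-invariant Weil divisor on it, so that Proposition \ref{globsecttoric} applies. Once this is verified, no further work is required, and in particular the pathology exhibited in Example \ref{Example:MondalNetzer}---where $\scrO(X\setminus Y) = \R$ while $H^0(U,\scrO(nY'))$ is infinite-dimensional---is ruled out in the toric setting.
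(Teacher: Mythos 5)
Your proposal is correct and follows essentially the same route as the paper: identify $U=X\setminus Y$ as an open toric subvariety, observe that $L_{X,n}(S)=H^0(U,\scrO_U(nY'))$ with $nY'$ a torus-invariant Weil divisor on $U$, apply Proposition \ref{globsecttoric}, and conclude via Theorem \ref{CompatibleCompletionIsoBounded} that $\scrO(U)=B_V(S)$. The extra bookkeeping you supply (the fan of $U$, the restriction of the reflexive sheaf) is exactly what the paper's terser proof leaves implicit.
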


Note that Proposition \ref{abstrfw} provides a sufficient condition
for the existence of a completion $X$ as required. In
particular, such $X$ exists if $S$ is defined by binomial
inequalities (\ref{bdpolbinom})

\begin{proof}
By assumption, $X$ is a complete toric variety, and every irreducible
component of $X\setminus V$ is torus invariant. Let $Y$ (resp.\ $Y'$)
be the union of those irreducible components of $X\setminus V$ for
which $\ol S\cap Y(\R)=\emptyset$ (resp.\ $\ol S\cap Y(\R)\ne
\emptyset$), and write $U=X\setminus Y$. Then $U$ is a toric variety.
By definition, $L_{X,n}(S)=H^0(U,\,\scrO(-nY'))$ for $n\ge0$. By
Proposition \ref{globsecttoric}, $L_{X,n}(S)$ is finitely generated
as a module over $\scrO(U)$, and $\scrO(U)=B_V(S)$ by Theorem
\ref{CompatibleCompletionIsoBounded}.
\end{proof}

In particular, $B_V(S)=\R$ implies that the spaces $L_{X,n}(S)$ are
all finite-dimen\-sional. If $V=\A^d$, this also implies that the
spaces $B_n(S)$ of Mondal-Netzer are all finite-dimensional,
using Proposition \ref{Prop:CompareCompletions}.


\section{Positive polynomials and stability}\label{sec:PositivePolynomials}

Let $V$ be an irreducible affine $\R$-variety, $S\subset V(\R)$ a
closed \sa\ set and
\[
\scrP_V(S) = \bigl\{f\in\R[V]\colon f|_S\ge 0\bigr\},
\]
the cone of non-negative regular functions on $S$. We will always
assume that $S$ is the closure of its interior in $V(\R)$. (This
property is sometimes referred to by saying that $S$ is ``regular''.)

\begin{dfn}
We say that $\scrP_V(S)$ is \emph{totally stable} if the following
holds: For every finite-dimensional subspace $U$ of $\R[V]$ there
exists a finite-dimensional subspace $W$ of $\R[V]$ such that for all
$r\ge 2$ and $f_1,\dots,f_r\in\scrP_V(S)$:
\[
 f_1+\cdots+f_r\in U\quad\Longrightarrow\quad f_1,\dots,f_r\in W.
\]
\end{dfn}

Consider the case $V=\A^n_\R$, $\R[V]=\R[x]$ with $x=(x_1,\dots,x_n)$.
Then $\scrP(S)$ is totally stable if and only if for every $d\ge 0$
there exists $e\ge d$ such that whenever $f_1,\dots,f_r\in\scrP(S)$
are such that $\deg(\sum_{i=1}^rf_i)\le d$, it follows that
$\deg(f_i)\le e$ for all $i$. Note that if $S=\R^n$, the leading forms
of two non-negative polynomials cannot cancel, so we may take $e=d$.

The property of total stability has consequences for the existence of
degree bounds for representations of positive polynomials by weighted
sums of squares, and to the moment problem in polynomial optimisation
and functional analysis. These questions have been a major motivation
for the study of bounded polynomials. The precise statement is as
follows: The moment problem for $S$ is said to be finitely solvable if
$\scrP_V(S)$ contains a dense finitely generated quadratic module $M$
(where dense means that no element of $\scrP_V(S)$ can be strictly
separated from $M$ by any linear functional on $\R[V]$). The main
result of \cite{sc} implies the following.

\begin{thm}
  If $S\subset V(\R)$ has dimension at least $2$ and $\scrP_V(S)$ is totally
  stable, then the moment problem for $S$ is not finitely solvable.\qed
\end{thm}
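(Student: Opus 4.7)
The plan is to argue by contradiction and reduce total stability of $\scrP_V(S)$ to the ordinary stability of a quadratic module, then invoke the main theorem of \cite{sc}. So suppose that the moment problem for $S$ is finitely solvable. By definition, there exists a finitely generated quadratic module $M = \sigma_0 + \sigma_1 g_1 + \cdots + \sigma_s g_s \subset \scrP_V(S)$ (with $g_1,\dots,g_s\in\R[V]$ fixed and the $\sigma_i$ ranging over sums of squares) which is dense in $\scrP_V(S)$ in the linear-functional sense.

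The key observation is that for any $f\in M$ with fixed representation $f = \sigma_0 + \sum_{i=1}^s\sigma_i g_i$, each of the summands $h_0:=\sigma_0$ and $h_i:=\sigma_i g_i$ lies in $\scrP_V(S)$: $h_0$ is a sum of squares, and for $i\ge 1$, $h_i$ is a product of a sum of squares and a polynomial non-negative on $S$. Now fix a finite-dimensional subspace $U$ of $\R[V]$ and apply total stability of $\scrP_V(S)$ with $r=s+1$ to obtain a finite-dimensional $W\subset\R[V]$ such that whenever $f\in M\cap U$ is written as above, each term $\sigma_i g_i$ (and $\sigma_0$) lies in $W$. This means the sums of squares $\sigma_i$ themselves are forced to lie in a fixed finite-dimensional subspace of $\R[V]$ (namely, in $W$ for $i=0$ and in an ``ideal-quotient'' $\{\sigma\colon\sigma g_i\in W\}$ for $i\ge 1$, which is still finite-dimensional). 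In the standard terminology (see, e.g., \cite{psc}), this is precisely the statement that the quadratic module $M$ is \emph{stable}.

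On the other hand, the main theorem of \cite{sc} asserts that for $\dim S\ge 2$, no finitely generated quadratic module in $\R[V]$ which is dense in $\scrP_V(S)$ can be stable. Applying this to our $M$ yields the desired contradiction.

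\textbf{Main obstacle.} The technical point I would want to check carefully is the implication from ``each $\sigma_i g_i$ lies in a finite-dimensional space'' to ``$\sigma_i$ itself lies in a finite-dimensional space''. This is immediate when $g_i$ is a non-zero-divisor in $\R[V]$, which holds here since $V$ is irreducible and we may assume each $g_i\ne 0$. A second point is to align precisely the notion of ``dense'' used in the definition of finite solvability of the moment problem with the hypothesis of the cited theorem from \cite{sc}; this is essentially a matter of unwinding definitions, as both refer to denseness with respect to the weakest topology making all linear functionals continuous.
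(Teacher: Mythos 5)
Your overall strategy matches the paper's intent: the paper gives no argument beyond the citation of \cite{sc}, and the expected derivation is exactly your reduction, namely that total stability of $\scrP_V(S)$ forces every finitely generated quadratic module $M\subset\scrP_V(S)$ to be stable, after which the main theorem of \cite{sc} (a finitely generated stable quadratic module whose associated set has dimension $\ge 2$ cannot be dense in the cone of non-negative functions) finishes the job. The first half of your argument is also sound: each summand $\sigma_0$, $\sigma_ig_i$ is non-negative on $S$, total stability confines each $\sigma_ig_i$ to a fixed finite-dimensional $W$, and since $\R[V]$ is a domain and $g_i\ne0$, multiplication by $g_i$ is injective, so $\{h\colon hg_i\in W\}$ is finite-dimensional.

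The gap is in the sentence ``this is precisely the statement that the quadratic module $M$ is stable''. Stability in the sense used in \cite{sc} requires more than that each $\sigma_i$ lie in a fixed finite-dimensional subspace: it requires a representation $\sigma_i=\sum_j p_{ij}^2$ with all $p_{ij}$ drawn from a fixed finite-dimensional subspace $W'$, and it is this stronger control that the proof there exploits. Bridging this needs two further steps. First, apply total stability once more, now to the decomposition $\sigma_i=\sum_jp_{ij}^2$ (each $p_{ij}^2$ lies in $\scrP_V(S)$, and the definition of total stability allows arbitrarily many summands); this puts every $p_{ij}^2$ into a finite-dimensional subspace $W''$. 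Second, one must show that $\{p\in\R[V]\colon p^2\in W''\}$ spans a finite-dimensional subspace. For $V=\A^n$ this is the usual leading-form argument, but for a general irreducible affine $V$ it is not a formal degree count: with respect to the filtration coming from a choice of generators one can have $\deg(p^2)$ much smaller than $\deg(p)$ (take $p=x^{51}$ in $\R[x,y]/(y^2-x^{101})$, where $\deg p=51$ but $p^2=xy^2$ has degree $3$). One way to prove the claim in general is to pass to a completion $X$ of (the normalization of) $V$ and use that $\ord_Z(p^2)=2\,\ord_Z(p)$ for every prime divisor $Z$ at infinity, so $p^2\in\Gamma(X,\scrO_X(D))$ forces $p\in\Gamma(X,\scrO_X(D))$ as well, a finite-dimensional space. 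With these two steps added your reduction is complete; the remaining point you raise about aligning the notion of density is indeed only a matter of unwinding definitions, once one notes that $S\subset\scrS(M)$, hence $\dim\scrS(M)\ge2$, and that density of $M$ in $\scrP_V(S)$ implies density in the smaller cone $\scrP_V(\scrS(M))$.
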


See \cite{ma}, \cite{pws} and \cite{sc} for a fuller discussion and
further references.

\bigskip We now examine total stability for open \sa\ sets that
admit a compatible completion. First note the following simple
observation that holds without any additional assumptions.

\begin{prop}\label{prop:BSRTotallyStable}
  If $\scrP_V(S)$ is totally stable, then $B_V(S)=\R$.
\end{prop}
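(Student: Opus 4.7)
My plan is to prove the contrapositive: if $B_V(S)\ne\R$, then $\scrP_V(S)$ is not totally stable. So take any non-constant $f\in B_V(S)$. The strategy is to exhibit an infinite linearly independent family of elements of $\scrP_V(S)$, each appearing in a two-term decomposition whose sum lies in the fixed one-dimensional subspace $U:=\R\cdot 1$ of $\R[V]$. This forces any candidate $W$ in the definition of total stability to be infinite-dimensional.

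First I produce the decompositions. Since $f$ is bounded on $S$, every even power $f^{2n}$ is bounded on $S$ as well; choose $c_n\in\R$ with $c_n\ge f^{2n}$ on $S$. Then $g_n:=f^{2n}$ and $h_n:=c_n-f^{2n}$ both lie in $\scrP_V(S)$, and $g_n+h_n=c_n\in U$ for every $n\ge 1$. If $\scrP_V(S)$ were totally stable, there would be a finite-dimensional $W\subset\R[V]$ containing every $g_n=f^{2n}$.

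Second, I argue that the family $\{f^{2n}\}_{n\ge 1}$ is linearly independent in $\R[V]$, which yields the required contradiction. Here the hypothesis that $S$ is the closure of its interior in $V(\R)$ is essential: the interior $S^\circ$ is non-empty, and as a full-dimensional Euclidean open subset of the irreducible variety $V$, it is Zariski dense in $V$. The image $f(S^\circ)$ must be infinite, for otherwise $S^\circ$ would be contained in a finite union of proper subvarieties $\{f=c_i\}$, contradicting its dimension. Any non-trivial linear relation $\sum_n a_n f^{2n}=0$ in $\R[V]$, restricted to $S^\circ$, would then force the polynomial $\sum_n a_n t^{2n}$ to vanish at infinitely many real $t$, hence identically; so all $a_n=0$.

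The main obstacle, such as it is, lies in the second step: one must appeal to the Zariski density of $S^\circ$ in $V$ and to the fact that a non-constant regular function on an irreducible variety cannot be constant on a Zariski-dense subset. Both are standard, but they are the only geometric ingredients the proof requires. With them in hand, $\{f^{2n}\}_{n\ge 1}$ is an infinite linearly independent subset of the finite-dimensional space $W$, contradiction, completing the contrapositive.
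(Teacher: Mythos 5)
Your proof is correct and follows essentially the same route as the paper: the decomposition $f^{2n}+(c_n-f^{2n})$ summing to a constant, total stability forcing all $f^{2n}$ into a fixed finite-dimensional $W$, and a contradiction with $f$ non-constant. The paper compresses your final step into ``$f$ is algebraic over $\R$ and therefore constant''; your explicit linear-independence argument via Zariski density of $S^\circ$ is a valid (and somewhat more detailed) way of justifying the same point.
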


\begin{proof}
  Let $f\in B_V(S)$, say $|f|\le\lambda$ on $S$ for some $\lambda\in\R$.
  Then $f^{2n}, \lambda^{2n}-f^{2n}\in\scrP_V(S)$ for all $n\ge
  1$.
  Since the sum of these two elements is constant, total stability
  implies the existence of a finite-dimensional subspace $W$ of
  $\R[V]$ containing $f^{2n}$ for all $n\ge 1$. Thus $f$ is
  algebraic over $\R$ and therefore constant.
\end{proof}

We are interested in the converse. Suppose that $V$ is normal and
admits an $S$-compatible completion
$V\into X$. Following the notation in \ref{lab:GeneralFiltration}, let
$Y$ be the union of all irreducible components $Z$ of $X\setminus V$
for which $\ol{S}\cap Z(\R)$ is empty, and let $Y'$ the union of those
for which $\ol{S}\cap Z(\R)$ is dense in $Z$. Consider the filtration
\[
L_{X,1}(S)\subset L_{X,2}(S)\subset\cdots
\]
of $\R[V]$ by (not necessarily finite-dimensional) subspaces, defined
in \ref{lab:GeneralFiltration}.
Given $f_1,\dots,f_r\in\scrP_V(S)$, we have
\[
\ord_Z(f_1+\cdots+f_r) = \min\{\ord_Z(f_1),\dots,\ord_Z(f_r)\}
\]
for every component $Z$ of $Y'$, by \cite[Lemma
3.4]{ps}. Thus if $f_1+\cdots+f_r\in L_{X,n}(S)$, then already
$f_1,\dots,f_r\in L_{X,n}(S)$. It follows that if the spaces $L_{X,n}(S)$ are
finite-dimensional, then $\scrP_V(S)$ is totally stable.

In the two-dimensional case, we have a sufficient condition in terms of
the intersection matrix, as seen in Section \ref{sec:trdeg}.

\begin{prop}\label{totstabnegintmat}%
  Let $V$ be a non-singular real affine surface and $S\subset V(\R)$
  a closed \sa\ set that is the closure of its interior. Assume that
  $V$ admits a non-singular
  $S$-compatible completion $V\into X$ such that the intersection
  matrix of the divisor $Y$ defined as above is negative
  definite. Then $\scrP_V(S)$ is totally stable.
\end{prop}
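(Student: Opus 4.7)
The plan is to reduce total stability directly to the finite-dimensionality of the spaces $L_{X,n}(S)$, exploiting the order observation immediately preceding the proposition, and then to obtain finite-dimensionality from Proposition~\ref{negintmat}.

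First I would record that a non-singular complete surface is automatically projective (a classical fact), so Proposition~\ref{negintmat} is available for $X$. By hypothesis the effective divisor $Y$ has negative definite intersection matrix $A_Y$; hence for every line bundle $L$ on $X$ the space $H^0(X\setminus Y,\,\scrO(L))$ is finite-dimensional over $\R$. Applying this with $L=\scrO_X(nY')$ for arbitrary $n\ge0$ gives
\[
\dim_\R L_{X,n}(S)\>=\>\dim_\R H^0(U,\scrO_X(nY'))\><\>\infty,
\]
where $U=X\setminus Y$.

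Next I would verify total stability. Let $U_0\subset\R[V]$ be any finite-dimensional subspace. Since the filtration $\{L_{X,n}(S)\}_{n\ge0}$ is exhaustive and $U_0$ is finite-dimensional, there exists $n\ge0$ with $U_0\subset L_{X,n}(S)$. Set $W:=L_{X,n}(S)$, which is finite-dimensional by the previous paragraph. Now take any $r\ge2$ and $f_1,\dots,f_r\in\scrP_V(S)$ with $f_1+\cdots+f_r\in U_0\subset L_{X,n}(S)$. By the observation preceding the proposition (which rests on \cite[Lemma~3.4]{ps}), for every irreducible component $Z$ of $Y'$
\[
\ord_Z(f_1+\cdots+f_r)\>=\>\min\bigl\{\ord_Z(f_1),\dots,\ord_Z(f_r)\bigr\}.
\]
Since the left-hand side is $\ge-n$, we get $\ord_Z(f_i)\ge-n$ for all $i$ and all components $Z$ of $Y'$, i.e.\ $f_i\in L_{X,n}(S)=W$ for all $i$. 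This is exactly total stability of $\scrP_V(S)$.

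The only substantive step is invoking Proposition~\ref{negintmat}; the rest is bookkeeping. So the main (and only) ``obstacle'' is ensuring that the hypotheses of that proposition really apply: $X$ non-singular complete (hence projective) surface, $Y$ effective with $A_Y\prec0$, and $L=\scrO_X(nY')$ an honest line bundle, which is clear because $X$ is non-singular and $Y'$ is an effective Cartier divisor on it.
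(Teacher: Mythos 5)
Your proposal is correct and follows exactly the paper's own argument: finite-dimensionality of the $L_{X,n}(S)$ from Proposition~\ref{negintmat} applied to $\scrO_X(nY')$, combined with the order identity $\ord_Z(\sum_i f_i)=\min_i\ord_Z(f_i)$ from the discussion preceding the proposition. The extra care you take in noting that a non-singular complete surface is projective (so that Proposition~\ref{negintmat} applies) is a detail the paper leaves implicit, but it is the standard fact and your treatment is fine.
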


\begin{proof}
  In this case, the spaces $L_{X,n}(S)$ are finite-dimensional by
  Prop.~\ref{negintmat}, so that $\scrP_V(S)$ is totally stable by the
  preceding discussion.
\end{proof}

In view of Prop.~\ref{prop:BSRTotallyStable}, we are also led to the
question of whether $B_V(S)=\R$ implies that
the spaces $L_{X,n}(S)$ are finite-dimensional over $\R$ for all $n\ge
0$. The example of Mondal and Netzer discussed in \ref{mnfilt} shows
this to be false in general. On the other hand, it is true in the
toric setting, as we saw in the preceding section.

\begin{thm}\label{Thm:TotalStabilityToric}
  Let $S$ be a closed \sa\ set in an affine toric variety $V$ that is
  the closure of its interior. Assume that
  $B_V(S)=\R$ and that $V$ admits a toric $S$-compatible completion
  $V\into X$. Then $\scrP_V(S)$ is totally stable.
\end{thm}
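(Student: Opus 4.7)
The plan is to observe that under the given hypotheses, every object needed to verify total stability is already in place: the filtration $\{L_{X,n}(S)\}_{n\ge 0}$ exhausts $\R[V]$, each of its terms is finite-dimensional (because of the toric hypothesis together with $B_V(S)=\R$), and it is compatible with sums of elements of $\scrP_V(S)$ thanks to the order non-cancellation property recalled just before Proposition~\ref{totstabnegintmat}. So essentially all one has to do is stitch these three facts together.

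More concretely, let $U\subset\R[V]$ be a finite-dimensional subspace. The first step is to find a single $n\ge 0$ such that $U\subset L_{X,n}(S)$. For this, I would note that since $Y'$ has only finitely many irreducible components, every $f\in\R[V]$ has a finite (possibly negative) minimum of $\ord_Z(f)$ over the components $Z$ of $Y'$, and hence lies in $L_{X,m}(S)$ for $m\gg 0$. Taking the maximum of such $m$ over a basis of $U$ yields the desired $n$.

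The second step is the total stability itself. I would set $W:=L_{X,n}(S)$ and verify the defining property. Suppose $f_1,\dots,f_r\in\scrP_V(S)$ with $\sum_{i=1}^r f_i\in U\subset L_{X,n}(S)$. For every irreducible component $Z$ of $Y'$, the non-cancellation identity
\[
\ord_Z(f_1+\cdots+f_r)\>=\>\min\{\ord_Z(f_1),\dots,\ord_Z(f_r)\}
\]
from \cite[Lem.~3.4]{ps} forces $\ord_Z(f_i)\ge -n$ for each $i$, i.e.\ $f_i\in L_{X,n}(S)=W$.

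Finally I need $W$ to be finite-dimensional over $\R$. This is exactly where the toric hypothesis enters: by Corollary~\ref{corglobsecttoric}, $L_{X,n}(S)$ is finitely generated as a module over $B_V(S)$, and by assumption $B_V(S)=\R$, so $W$ is finite-dimensional as an $\R$-vector space. I do not foresee a real obstacle here; the only point worth stating carefully is the exhaustiveness argument in the first step, since it is the one ingredient not spelled out in a numbered result earlier in the paper, and the observation that the order non-cancellation property requires only the $S$-compatibility of the completion (which is automatic from the toric $S$-compatibility assumption), not the toric structure itself.
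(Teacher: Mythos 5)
Your proposal is correct and follows essentially the same route as the paper: the paper's proof likewise combines the finite-dimensionality of the $L_{X,n}(S)$ (Corollary~\ref{corglobsecttoric} together with $B_V(S)=\R$) with the order non-cancellation identity and the exhaustiveness of the filtration noted in \ref{lab:GeneralFiltration}. The only difference is that you spell out the exhaustiveness and the choice of $W=L_{X,n}(S)$ explicitly, which the paper leaves implicit.
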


\begin{proof}
  By Corollary \ref{corglobsecttoric}, the spaces $L_{X,n}(S)$ constructed
  from the completion $X$ are finite-dimensional over
  $B_V(S)=\R$. Hence $\scrP_V(S)$ is totally stable by the
  argument above.
\end{proof}



\begin{thebibliography}{8888}

\bibitem{al}
D.~Alessandrini:
Logarithmic limit sets of real semi-algebraic sets.
Adv.\ Geom.\ \textbf{13}~(1), 155--190 (2013).

\bibitem{cls}
D.A.~Cox, J.B.~Little, H.K.~Schenck:
\emph{Toric Varieties}.
Grad.\ Studies Math.\ \textbf{124}, Am.\ Math.\ Soc., Providence, RI,
2011.

\bibitem{ha156}
R.~Hartshorne:
\emph{Ample Subvarieties of Algebraic Varieties}.
Lect.\ Notes Math.\ \textbf{156}, Springer, Berlin, 1970.

\bibitem{ii}
S.~Iitaka:
\emph{Algebraic Geometry}.
An Introduction to Birational Geometry of Algebraic Varieties.
Grad.\ Texts Math.\ \textbf{76}, Springer, New York, 1982.

\bibitem{kr}
S.~Krug:
Geometric interpretations of a counterexample to Hilbert's 14th
problem, and rings of bounded polynomials on semialgebraic sets.
Preprint, arxiv 1105:2029.

\bibitem{la}
R.~Lazarsfeld:
\emph{Positivity in Algebraic Geometry~I}.
Classical Setting: Line Bundles and Linear Series.
Erg.\ Math.\ \textbf{48}, Springer, Berlin, 2004.

\bibitem{ma}
M. Marshall:
\emph{Positive Polynomials and Sums of Squares}.
Mathematical Surveys and Monographs \textbf{146}, AMS, Providence,
RI, 2008.

\bibitem{mo}
P.~Mondal: 
Projective completions of affine varieties via degree-like functions.
Asian J.~Math.\ \textbf{18}, 573--602 (2014).

\bibitem{mn}
P.~Mondal, T.~Netzer:
How fast do polynomials grow on semialgebraic sets?
J.~Algebra \textbf{413}, 320--344 (2014).

\bibitem{ne}
T.~Netzer:
Stability of quadratic modules.
Manuscripta math.\ \textbf{129}, 251--271 (2009).

\bibitem{ps}
D.~Plaumann, C.~Scheiderer:
The ring of bounded polynomials on a semi-algebraic set.
Trans.\ Am.\ Math.\ Soc.\ \textbf{364}, 4663--4682 (2012).

\bibitem{pws}
V.~Powers, C.~Scheiderer:
The moment problem for non-compact semialgebraic sets.
Adv.\ Geom.\ \textbf{1}, 71--88 (2001).

\bibitem{sc}
C.~Scheiderer:
Non-existence of degree bounds for weighted sums of squares
representations.
J.~Complexity \textbf{21}, 823--844 (2005).

\bibitem{vi}
C.~Vinzant:
Real radical initial ideals.
J. Algebra, \textbf{352}, 392--407 (2012).

\end{thebibliography}
\end{document}